\let\OLDthebibliography\thebibliography
\renewcommand\thebibliography[1]{
	\OLDthebibliography{#1}
	\setlength{\parskip}{0pt}
	\setlength{\itemsep}{2pt} 
}
\theoremstyle{definition}
\newtheorem{df}{Definition}[section]
\newtheorem{rem}[df]{Remark}
\theoremstyle{plain}
\newtheorem{thm}[df]{Theorem}
\newtheorem{pp}[df]{Proposition}
\newtheorem{co}[df]{Corollary}
\newcommand{\fk}{\mathfrak}
\newcommand{\mc}{\mathcal}
\newcommand{\wch}{\widecheck}
\newcommand{\ovl}{\overline}
\newcommand{\tr}{\mathrm{t}} %transpose
\newcommand{\End}{\mathrm{End}} %endomorphism
\newcommand{\id}{\mathbf{1}}
\newcommand{\Hom}{\mathrm{Hom}}
\newcommand{\ev}{\mathrm{ev}}
\newcommand{\coev}{\mathrm{coev}}
\newcommand{\Rep}{\mathrm{Rep}}
\newcommand{\bk}[1]{\langle {#1}\rangle}
\newcommand{\scr}{\mathscr}
\newcommand{\gk}{\mathfrak g}
\newcommand{\im}{\mathbf{i}}
\newcommand{\Co}{\complement}
\newcommand{\mbb}{\mathbb}
\newcommand{\blt}{\bullet}
\newcommand{\Cbb}{\mathbb C}
\newcommand{\Nbb}{\mathbb N}
\newcommand{\Zbb}{\mathbb Z}
\newcommand{\wt}{\mathrm{wt}}
\newcommand{\tth}{\text{\textturnh}}
\numberwithin{equation}{section}
\title{Regular vertex operator subalgebras and compressions of intertwining operators}
\author{{\sc Bin Gui}%\\
	%{\small Department of Mathematics, Rutgers university}\\
	%{\small bin.gui@rutgers.edu}
}
\date{}
\begin{document}\sloppy % avoid stretch into margins
	\pagenumbering{arabic}
	%\pagenumbering{gobble}
	%\newpage
	%\setcounter{page}{1}
	\setcounter{section}{-1}

	\maketitle

\begin{abstract}
Let $V$ be a vertex operator subalgebra of $U$. Assume that $U$, $V$, and its commutant  $V^c$ in $U$ are CFT-type, self-dual, and regular VOAs. Assume also that the double commutant $V^{cc}$ equals $V$. We prove that any intertwining operator of $V$ is a compression of intertwining operators of $U$.
\end{abstract}

\section{Introduction}

In \cite{KM15}, Krauel-Miyamoto showed that if $V$ is a vertex operator subalgebra of $U$, if $U$, $V$, and the commutant $V^c$ are CFT-type, self-dual, and regular VOAs, and if $V^{cc}=V$, then any irreducible $V$-module appears  in some irreducible $U$-module. For example, by \cite{Ara15a,Ara15b,ACL19}, these assumptions are satisfied when we take $V\subset U$ to be $L_{k+1}(\gk)\subset L_k(\gk)\otimes L_1(\gk)$, where $k$ is a positive integer, $\gk$ is a finite dimensional complex simple Lie algebra of type $ADE$, and $L_k(\gk)$ is the corresponding (unitary) affine VOA. In this case, $V^c$ is a discrete series principle $W$-algebra $\mc W_l(\gk)$.

The above results have important applications to the unitarity problems in VOAs: For example, we can conclude that any irreducible $\mc W_l(\gk)$-module is unitarizable since this is true for any unitary affine VOA. Moreover,  using these results (together with the techniques developed in  \cite{Ten17,Ten19a,Ten19b}), Tener showed in \cite{Ten19c} that the modular tensor categories associated to all unitary affine VOAs and type $AE$ discrete series $W$-algebras are unitary, and solved a longstanding problem in subfactor theory and algebraic quantum field theory: that the conformal nets associated to unitary affine VOAs and type $ADE$ discrete series $W$-algebras are completely rational.

In this paper, we generalize the result of \cite{KM15} to intertwining operators: We show that any  intertwining operator of $V$ is a compression of intertwining operators of $U$ (theorem \ref{lb7}). To be more precise, let $W_I,W_J,W_K$ be (ordinary) $U$-modules, which can also be regarded as weak $V$-modules. Suppose that $\mc Y^U$ is a type $W_K\choose W_IW_J$ intertwining operator of $U$, and $W_i,W_j,W_k$ are graded irreducible $V$-submodules of $W_I,W_J,W_K$ respectively, then one can find $\lambda\in\mbb Q$ such that  $z^\lambda$  times the restriction of $\mc Y^U$ to $W_i,W_j,W_k$ is an intertwining operator $\mc Y$ of $V$. (Note that without the factor $z^\lambda$, the restriction itself may not satisfy the $L_{-1}$-derivative property.) We then say that $\mc Y$ is a \textbf{compression} of $\mc Y^U$. (See definition \ref{lb8} for more details.) Our main result of this article is that any intertwining operator of $V$ can be written as a (finite) sum of those that are compressions of intertwining operators of $U$.

Our result can be applied to prove many important functional analytic properties for intertwining operators.  One such property is the (polynomial) energy bounds condition \cite{CKLW18,Gui19a,Gui19b}, which says roughly that the smeared intertwining operators are bounded by $L_0^n$ for some $n\geq0$. Proving energy bounds condition for intertwining operators is a key step in relating the tensor structures of VOA modules and the corresponding conformal net modules; see \cite{Was98,TL04, Gui18,Gui20}. On the other hand,  one may deduce the energy bounds condition of the compressed intertwining operator $\mc Y$ from that of $\mc Y^U$. Since, by our main result, any intertwining operator of $L_k(\gk)$ or $\mc W_l(\gk)$ (when $\gk$ is of type $ADE$) is a compression of tensor products of intertwining operators of $L_1(\gk)$, and since the latter were proved in \cite{TL04} to be energy bounded, we can conclude that all intertwining operators of $L_k(\gk)$ or $\mc W_l(\gk)$ are energy bounded. This result will be used in \cite{Gui20} to show that if $V$ is $L_k(\gk)$ or $\mc W_l(\gk)$, and if $\mc A$ is the corresponding conformal net, then the tensor and braid structures of the representation categories of $V$ and $\mc A$ are compatible.

\subsubsection*{Acknowledgment}

The author would like to thank James Tener for helpful discussions on the topic of this paper.

\section{Intertwining operators and tensor categories}

Let $(V,Y,\id,\nu)$ be a vertex operator algebra (VOA) where $\id$ is the vacuum vector and $\nu$ is the conformal vector. For any $v\in V$, write $Y(v,z)=\sum_{n\in\Zbb} Y(v)_nz^{-n-1}$ where $Y(v)_n\in\End(V)$. Then $L_n:=Y(\nu)_{n+1}$ satisfy the Virasoro relation
\begin{align*}
[L_n,L_m]=(n-m)L_{n+m}+\delta_{n,-m}\frac{n^3-n}{12}c,
\end{align*}
where $c$ is the central charge of $V$. We shall always assume that $V$ is CFT-type, namely, $V$ has $L_0$-grading $V=\bigoplus_{n\in\Nbb}V(n)$ and $V(0)=\Cbb\id$. We also assume that $V$ is self-dual and regular (equivalently, self-dual, rational and $C_2$-cofinite \cite{ABD04}). Note that the self-dual condition is equivalent to the existence of a non-degenerate invariant bilinear form. As a consequence of CFT-type and being self-dual, $V$ is simple. (See, for example, \cite{CKLW18} proposition 6.4-(iv).) Moreover, any (ordinary) $V$-module is semisimple, and the category of $V$-modules is a rigid modular tensor category \cite{Hua08}.

We write $V$-modules as $W_i,W_j,W_k,\dots$ whose vertex operators are denoted by $Y_i,Y_j,Y_k,\dots$ respectively. $V$ itself as a $V$-module (the vacuum module) will also be written as $W_0$. We write $Y_i(v,z)=\sum_{n\in\Zbb}Y_i(v)_nz^{-n-1}$ where each $Y_i(v)_n\in\End(W_i)$. Again, any $V$-module has $L_0$-grading  
$W_i=\bigoplus_{n\in\Cbb}W(n)$. Recall that a homogeneous vector of $W_i$ is, by definition, an eigenvector of $L_0$. In the case that $W_i$ is irreducible (i.e. simple), we furthermore have $W_i=\bigoplus_{n\in\Nbb+\alpha}W(n)$ for some $\alpha\in\Cbb$. (Indeed, $\alpha\in\mbb Q$ by \cite{AM88,DLM00}.) We let $P_n$ denote the projection of $W_i$ onto $W_i(n)$. We also let
\begin{gather*}
W_i(\leq n)=\bigoplus_{\mathrm{Re}(m)\leq n}W(m),
\end{gather*}
and let $P_{\leq n}$ be the projection of $W_i$ onto $W_i(\leq n)$.

Let $W_{\ovl i}$ denote the contragredient module of $W_i$. Recall that as a vector space, $W_{\ovl i}=\bigoplus_{n\in\Cbb}W_i(n)^*$. (See \cite{FHL93} for more details.) The evaluation between $w'\in W_{\ovl i}$ and $w\in W_i$ is written as $\bk {w,w'}$ or $\bk{w',w}$. (The same notation will be used if one of $w,w'$ is in the algebraic completion.) Since $V$ is self-dual, we identify the vacuum module $V=W_0$ and its contragredient module $W_{\ovl 0}$. $W_{\ovl{\ovl i}}$ is identified with $W_i$ in an obvious way.

Recall that if  $W_i,W_j,W_k$ are $V$-modules, an intertwining operator $\mc Y$ of type $W_k\choose W_iW_j$ (or $k\choose i~j$ for short) is a linear map
\begin{gather*}
W_i\rightarrow \End(W_j,W_k)\{z\}\\
w_i\mapsto \mc Y(w^{(i)},z)=\sum_{n\in\Cbb}\mc Y(w^{(i)})_nz^{-n-1}
\end{gather*}
where the sum above is the formal sum,  each $\mc Y(w^{(i)})_n$ is in $\End(W_j,W_k)$, and the following conditions are satisfied:

(a) (Lower truncation) For any $w^{(j)}\in W_j$, $\mathcal Y(w^{(i)})_nw^{(j)}=0$ when $\mathrm{Re}(n)$ is sufficiently large.

(b) (Jacobi identity) For any $u\in V,w^{(i)}\in W_i,m,n\in\mathbb Z,s\in\Cbb$, we have
\begin{align}
&\sum_{l\in\Nbb}{m\choose l}\mathcal Y\big(Y_i(u)_{n+l}w^{(i)}\big)_{m+s-l}\nonumber\\
=&\sum_{l\in\Nbb}(-1)^l{n\choose l}Y_k(u)_{m+n-l}\mathcal Y(w^{(i)})_{s+l}-\sum_{l\in\Nbb}(-1)^{l+n}{n\choose l}\mathcal Y(w^{(i)})_{n+s-l}Y_j(u)_{m+l}.
\end{align}

(c)	($L_{-1}$-derivative) $	\frac d{dz} \mathcal Y(w^{(i)},z)=\mathcal Y(L_{-1}w^{(i)},z)$ for any $w^{(i)}\in W_i$.\\
We say that $W_i,W_j,W_k$ are respectively the \textbf{charge space}, the \textbf{source space}, and the \textbf{target space} of $\mc Y$. If $W_i,W_j,W_k$ are all irreducible, we say that $\mc Y$ is an \textbf{irreducible intertwining operator}.

Recall that given $\mc Y\in\mc V{k\choose i~j}$, one can define $\mbb B\mc Y\in\mc V{k\choose j~i}$ and $\Co\mc Y\in\mc V{\ovl j\choose i~\ovl k}$, called the (positively) \textbf{braided} intertwining operator and the \textbf{contragredient} intertwining operator of $\mc Y$, by choosing any $w^{(i)}\in W_i,w^{(j)}\in W_j,w^{(\ovl k)}\in W_{\ovl k}$ and setting
\begin{gather*}
\mbb B\mc Y(w^{(j)},z)w^{(i)}=e^{zL_{-1}}\mc Y(w^{(i)},e^{\im\pi}z)w^{(j)},\\
\bk{\Co\mc Y(w^{(i)},z)w^{(\ovl k)},w^{(j)}}=\bk{w^{(\ovl k)}, \mc Y(e^{zL_1}(e^{\im\pi}z^{-2})^{L_0}w^{(i)},z^{-1})w^{(j)} }.
\end{gather*}
In particular, $Y_{\ovl i}\in\mc V{\ovl i\choose 0~\ovl i}$ (the vertex operator for $W_{\ovl i}$) is contragredient to $Y_i$. See \cite{FHL93} for details.

We refer the reader to \cite{HL13} and the references therein for the definition and basic properties of the tensor category $\Rep(V)$ of $V$-modules. (See also \cite{Gui19a} for a sketch of the Huang-Lepowsky tensor product theory.) Roughly speaking, $\Rep(V)$ is defined such that the fusion rules are exactly the dimensions of the spaces of intertwining operators, and the $R$- and $F$-matrices are described by the braid and the fusion relations of intertwining operators. 

To be more precise, the tensor functor (fusion product) $\boxtimes$ is defined in such a way that there is a functorial isomorphism
\begin{gather}
\Hom_V(W_i\boxtimes W_j,W_k)\xrightarrow{\simeq} \mc V{k\choose i~j},\qquad \upalpha\mapsto\mc Y_\upalpha\label{eq12}
\end{gather}
for any $V$-modules $W_i,W_j,W_k$. By saying  this map is functorial, we mean that if $F\in\Hom_V(W_{i'},W_i)$, $G\in\Hom_V(W_{j'},W_j)$, and $H\in\Hom_V(W_k,W_{k'})$, then for any $w^{(i')}\in W_{i'}$ and $w^{(j')}\in W_{j'}$,
\begin{align*}
\mc Y_{H\upalpha(F\otimes G)}(w^{(i')},z)w^{(j')}=H\mc Y_\upalpha(Fw^{(i')},z)Gw^{(j')}.
\end{align*}
In particular, we denote by $\mc L_{i,j}\in\mc V{i\boxtimes j\choose i~j}=\mc V{W_i\boxtimes W_j\choose W_i~W_j}$ the  value of the identity element $\id_{i\boxtimes j}\in\Hom_V(W_i\boxtimes W_j,W_i\boxtimes W_j)$ under \eqref{eq12}, i.e.,
\begin{align*}
\mc L_{i,j}=\mc Y_{\id_{i\boxtimes j}}.
\end{align*}
Here we adopt the notation
\begin{align*}
W_{i\boxtimes j}=W_i\boxtimes W_j.
\end{align*}
Then for any $\upalpha\in\Hom_V(W_i\boxtimes W_j,W_k)$, by the functoriality of \eqref{eq12} and that $\upalpha=\upalpha\cdot \id_{W_i\boxtimes W_j}$, it is clear that
\begin{gather*}
\mc Y_\upalpha=\upalpha\mc L_{i,j}.
\end{gather*}
(In the papers of Huang-Lepowsky, $\mc L_{i,j}(w^{(i)},z)w^{(j)}$ is written as $w^{(i)}\boxtimes_{P(z)}w^{(j)}$, regarded as a fusion product of the vectors $w^{(i)},w^{(j)}$.)

Note that $Y_i\in\mc V{i\choose 0~i}$. The left unitor $W_0\boxtimes W_i\xrightarrow{\simeq}W_i$ is defined such that it is sent by \eqref{eq12} to the element $Y_i$. The right unitor
\begin{align*}
\upkappa(i):W_i\boxtimes W_0\xrightarrow{\simeq} W_i
\end{align*}
is defined such that
\begin{align*}
\mc Y_{\upkappa(i)}=\mbb B Y_i.
\end{align*}
We call $\mc Y_{\upkappa(i)}\in\mc V{i\choose i~0}$ the \textbf{creation operator} of $W_i$.

The braid isomorphism $\mbb B=\mbb B_{i,j}:W_i\boxtimes W_j\xrightarrow{\simeq} W_j\boxtimes W_i$ is defined such that
\begin{align*}
\mc Y_{\mbb B_{i,j}}=\mbb B\mc L_{i,j}.
\end{align*}
We will not use braiding in this article. To describe the associativity isomorphisms, we first notice:
\begin{pp}\label{lb1}
Choose any $z\in\Cbb^\times=\Cbb-\{0\}$. Then for any $n\in\Cbb$,
\begin{gather}
\underset{w^{(i)}\in W_i,w^{(j)}\in W_j}{\mathrm{Span}}P_{\leq n}\cdot \mc L_{i,j}(w^{(i)},z)w^{(j)}=(W_i\boxtimes W_j)(\leq n).\label{eq1}
\end{gather}
\end{pp}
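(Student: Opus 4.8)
The plan is to establish the nontrivial inclusion $\supseteq$ in \eqref{eq1} by a duality argument; the inclusion $\subseteq$ is immediate, since for homogeneous $w^{(i)},w^{(j)}$ the vector $\mc L_{i,j}(w^{(i)},z)w^{(j)}$ lies in the algebraic completion and its components have weights bounded below, so $P_{\leq n}\mc L_{i,j}(w^{(i)},z)w^{(j)}$ visibly lies in the finite-dimensional space $(W_i\boxtimes W_j)(\leq n)$. Write $W_K=W_i\boxtimes W_j$ and let $S_{\leq n}$ denote the left-hand side of \eqref{eq1}. Suppose, for contradiction, that $S_{\leq n}\subsetneq W_K(\leq n)$. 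Since $W_K(\leq n)$ is finite-dimensional, there is a nonzero functional on it vanishing on $S_{\leq n}$; extending it by zero on the remaining graded pieces produces a nonzero $\phi\in W_{\ovl K}$ supported in degrees $m$ with $\mathrm{Re}(m)\leq n$. As $\phi$ is supported in those degrees, $\bk{P_{\leq n}\xi,\phi}=\bk{\xi,\phi}$ for every $\xi$ in the algebraic completion of $W_K$, so $\phi\perp S_{\leq n}$ reads
\[
\bk{\mc L_{i,j}(w^{(i)},z)w^{(j)},\phi}=0\qquad\text{for all }w^{(i)}\in W_i,\ w^{(j)}\in W_j.
\]
The goal is to deduce $\phi=0$, which is the desired contradiction.

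First I would upgrade this single-point vanishing to the vanishing of every coefficient. Fix $w^{(i)},w^{(j)}$ and set $F(\zeta)=\bk{\mc L_{i,j}(w^{(i)},\zeta)w^{(j)},\phi}=\sum_s\bk{\mc L_{i,j}(w^{(i)})_s w^{(j)},\phi}\zeta^{-s-1}$, a finite sum of powers of $\zeta$ (finite because $\phi$ is finitely supported in degree). By the $L_{-1}$-derivative property, iterated, $\frac{d^m}{d\zeta^m}F(\zeta)=\bk{\mc L_{i,j}(L_{-1}^m w^{(i)},\zeta)w^{(j)},\phi}$ for every $m\geq0$. Evaluating at $\zeta=z$ and applying the displayed hypothesis to the vectors $L_{-1}^m w^{(i)}$ shows that $F$ and all of its derivatives vanish at the single point $z\neq0$. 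Since $F$ is a finite linear combination of power functions $\zeta^{-s-1}$ with distinct exponents, which on a simply connected neighborhood of $z$ are single-valued, analytic and linearly independent, this forces every coefficient $\bk{\mc L_{i,j}(w^{(i)})_s w^{(j)},\phi}$ to vanish. Hence $\phi$ annihilates the span $S$ of all coefficients $\mc L_{i,j}(w^{(i)})_s w^{(j)}$ with $w^{(i)}\in W_i$, $w^{(j)}\in W_j$, $s\in\Cbb$.

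It then remains to prove $S=W_K$, for then $\phi=0$. Here I would invoke the universal property \eqref{eq12}. The commutator formula obtained from the Jacobi identity (b) by setting its parameter $n=0$, namely
\[
Y_{i\boxtimes j}(u)_m\,\mc L_{i,j}(w^{(i)})_s=\mc L_{i,j}(w^{(i)})_s\,Y_j(u)_m+\sum_{l\geq0}{m\choose l}\mc L_{i,j}(Y_i(u)_l w^{(i)})_{m+s-l},
\]
shows that $S$ is stable under every mode $Y_{i\boxtimes j}(u)_m$ (the sum over $l$ being finite by lower truncation of $Y_i$); being also spanned by homogeneous vectors, $S$ is a $V$-submodule of $W_K$. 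If $S$ were proper, the quotient map $q\colon W_K\to W_K/S$ would be a nonzero morphism with $q\mc L_{i,j}=0$, since every coefficient lands in $S=\ker q$. But under \eqref{eq12} one has $\mc Y_q=q\mc L_{i,j}$, so $\mc Y_q=0$; as \eqref{eq12} is an isomorphism this gives $q=0$, a contradiction. Therefore $S=W_K$, and the proof is complete.

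The main obstacle is the passage from the single chosen point $z$ to all the coefficients of $\mc L_{i,j}$: a priori the hypothesis kills only one numerical combination of coefficients per pair $(w^{(i)},w^{(j)})$, and distinct powers of a fixed nonzero $z$ cannot be separated from a single linear relation. The device that resolves this is the $L_{-1}$-derivative property, which lets one differentiate in $z$ while staying within the reach of the hypothesis, thereby recovering all the coefficients. Once the coefficients are controlled, the fact that they span $W_K$ is precisely the generation property of the universal intertwining operator $\mc L_{i,j}$ and follows formally from \eqref{eq12}.
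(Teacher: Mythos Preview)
Your proof is correct and follows the same duality strategy as the paper's sketch: produce a nonzero vector in the contragredient annihilating $\mc L_{i,j}(\cdot,z)\cdot$, and derive a contradiction from the functorial isomorphism \eqref{eq12}. The one technical difference is in how the single point $z$ is handled. You use the $L_{-1}$-derivative property to pass from vanishing at $z$ to vanishing of every mode $\mc L_{i,j}(w^{(i)})_s w^{(j)}$, and then show the span $S$ of these modes is a $V$-submodule equal to all of $W_K$. The paper instead works on the contragredient side: it shows directly, via the commutator formula $[Y_k(u)_m,\mc L_{i,j}(w^{(i)},z)]=\sum_{l\geq0}\binom{m}{l}z^{m-l}\mc L_{i,j}(Y_i(u)_l w^{(i)},z)$ at the fixed $z$, that the space $\mc W\subset W_{\ovl k}$ of all annihilating vectors is a $V$-submodule, hence graded; one may then take a homogeneous annihilating vector, for which the pairing with $\mc L_{i,j}(w^{(i)},z)w^{(j)}$ picks out a single mode times a nonzero power of $z$, giving mode-by-mode vanishing without any differentiation. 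Both routes are short; the paper's avoids the analyticity argument you flagged as the main obstacle.
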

This proposition was proved in \cite{Gui19a} section A.2.\footnote{Proposition \ref{lb1} is similar to but slightly stronger than \cite{Hua95} Lemma 14.9. That lemma says that \eqref{eq1} holds with $P_{\leq s}$ replaced by $P_s$ and $(W_i\boxtimes W_j)(\leq s)$ by $(W_i\boxtimes W_j)(s)$. Huang's result is enough for applications in our paper.}  The main idea of the proof is as follows. Set $W_k=W_i\boxtimes W_j$. Then it is equivalent to proving that for any $w^{(\ovl k)}\in W_{\ovl k}$, if
\begin{align*}
\bk{w^{(\ovl k)},\mc L_{i,j}(w^{(i)},z)w^{(j)}}=0
\end{align*}
for any $w^{(i)}\in W_i$ and $w^{(j)}\in W_j$, then $w^{(\ovl k)}=0$. To see this, let $\mc W\subset W_{\ovl k}$ be the subspace of all $w^{(\ovl k)}$ satisfying the above identity. Using the Jacobi identity for intertwining operators, it is easy to see that $\mc W$ is $V$-invariant, i.e. it is a $V$-submodule of $W_{\ovl k}$. Assume $\mc W$ is non-trivial. Choose an irreducible module $W_l$ such that $\mc W$ has an irreducible submodule isomorphic to $W_{\ovl l}$. Then there is a non-zero $T\in\Hom_V(W_k,W_l)$ whose transpose $T^\tr\in \Hom_V(W_{\ovl l},W_{\ovl k})$ maps $W_{\ovl l}$ into $\mc W$. Using the definition of $\mc W$ and the fact that $T^\tr w^{(\ovl l)}\in\mc W$ for each $w^{(\ovl l)}\in W_{\ovl l}$, it is easy to see that $T\mc L_{i,j}=0$. Recall $W_k=W_i\boxtimes W_j$. So we can write $T\mc L_{i,j}=\mc Y_T$. Therefore $T=0$, which gives a contradiction.

\begin{co}\label{lb2}
Let $W_i,W_j,W_s$ be $V$-modules, and assume that $W_s$ is isomorphic to an  irreducible submodule of $W_i\boxtimes W_j$. If $\Xi^s_{i,j}$ is a basis of $\Hom_V(W_i\boxtimes W_j,W_s)$, $\upalpha\in\Xi^s_{i,j}$,  then  there exist homogeneous vectors $w^{(i)}_1,\dots,w^{(i)}_m\in W_i$,  $w^{(j)}_1,\dots,w^{(j)}_m\in W_j$, $w^{(\ovl s)}\in W_{\ovl s}$, and constants $\lambda_1,\dots,\lambda_m\in\mbb Q$, such that for any $\upbeta\in\Xi^s_{i,j}$, the expression
\begin{align}
\sum_{l=1}^m z^{\lambda_l}\bk{\mc Y_{\upbeta}(w^{(i)}_l,z)w^{(j)}_l,w^{(\ovl s)}}\label{eq15}
\end{align}
is a constant (where $z$ is a complex variable), and it is non-zero if and only if $\upbeta=\upalpha$.
\end{co}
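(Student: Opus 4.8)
The plan is to recast the statement as a point-separation fact for linear functionals on the finite-dimensional space $\Hom_V(W_i\boxtimes W_j,W_s)$ and to derive it from Proposition \ref{lb1} together with the irreducibility of $W_s$. Write $W_k=W_i\boxtimes W_j$, and recall that $\mc Y_\upbeta=\upbeta\mc L_{i,j}$. For homogeneous $w^{(i)}\in W_i$, $w^{(j)}\in W_j$, $w^{(\ovl s)}\in W_{\ovl s}$, the transpose $\upbeta^\tr\in\Hom_V(W_{\ovl s},W_{\ovl k})$ gives
\begin{align*}
\bk{\mc Y_\upbeta(w^{(i)},z)w^{(j)},w^{(\ovl s)}}=\bk{\mc L_{i,j}(w^{(i)},z)w^{(j)},\upbeta^\tr w^{(\ovl s)}}.
\end{align*}
Since the three vectors are homogeneous, pairing against $w^{(\ovl s)}$ selects a single $L_0$-weight, so the left-hand side is a single monomial $c_\upbeta\,z^{-n-1}$ in which $n=\wt w^{(i)}+\wt w^{(j)}-\wt w^{(\ovl s)}-1$ depends only on the weights and not on $\upbeta$. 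Thus, taking $\lambda:=n+1$ makes $z^\lambda\bk{\mc Y_\upbeta(w^{(i)},z)w^{(j)},w^{(\ovl s)}}=c_\upbeta$ constant, and $\lambda\in\mbb Q$ because every $V$-module here is semisimple with rational conformal weights (by \cite{AM88,DLM00}). Hence, for homogeneous vectors the constancy clause and the rationality of the $\lambda_l$ are automatic, and the real content of the corollary is the normalization ``non-zero if and only if $\upbeta=\upalpha$''.

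For fixed homogeneous $w^{(i)},w^{(j)},w^{(\ovl s)}$ (with the associated $n$), the assignment $\ell_{w^{(i)},w^{(j)}}(\upbeta):=c_\upbeta=\bk{\mc Y_\upbeta(w^{(i)})_n w^{(j)},w^{(\ovl s)}}$ is a linear functional on $\Hom_V(W_i\boxtimes W_j,W_s)$. After absorbing scalar coefficients into the charge-space vectors, the expression \eqref{eq15} is an arbitrary finite linear combination of such functionals that all share a single $w^{(\ovl s)}$. It therefore suffices to fix one nonzero homogeneous $w^{(\ovl s)}$ and to show that the functionals $\ell_{w^{(i)},w^{(j)}}$, as $w^{(i)},w^{(j)}$ range over homogeneous vectors, span the full dual space $\Hom_V(W_i\boxtimes W_j,W_s)^*$; then the dual basis vector $\upalpha^*$ (equal to $1$ on $\upalpha$ and $0$ on every other element of $\Xi^s_{i,j}$) lies in their span, which furnishes exactly the required data. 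As the space is finite dimensional, spanning the dual is equivalent to separating points, so it is enough to show that the common kernel of these functionals is $\{0\}$.

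To prove the separation I would invoke Proposition \ref{lb1}. Suppose $\upbeta$ lies in the common kernel, so that $\bk{\mc L_{i,j}(w^{(i)},z)w^{(j)},\upbeta^\tr w^{(\ovl s)}}=0$ for all homogeneous, hence all, $w^{(i)},w^{(j)}$. The non-degeneracy established in the proof of Proposition \ref{lb1} --- namely that a vector of $W_{\ovl k}$ pairing to zero with every $\mc L_{i,j}(w^{(i)},z)w^{(j)}$ must vanish --- forces $\upbeta^\tr w^{(\ovl s)}=0$. Now $\upbeta^\tr$ is a $V$-module homomorphism, so $\ker\upbeta^\tr$ is a submodule of $W_{\ovl s}$; since $W_s$, and hence $W_{\ovl s}$, is irreducible and $w^{(\ovl s)}\neq0$, this submodule must be all of $W_{\ovl s}$, giving $\upbeta^\tr=0$ and thus $\upbeta=0$. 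This establishes point separation, and then writing $\upalpha^*=\sum_{l=1}^m a_l\,\ell_{w^{(i)}_l,w^{(j)}_l}$ and replacing each $w^{(i)}_l$ by $a_l w^{(i)}_l$ produces the desired vectors $w^{(i)}_l,w^{(j)}_l,w^{(\ovl s)}$ and constants $\lambda_l$.

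The main obstacle is the reduction to a single $w^{(\ovl s)}$: this is what connects the soft linear-algebra argument to the VOA input, and it is exactly the point at which both the non-degeneracy behind Proposition \ref{lb1} and the irreducibility of $W_s$ are essential. Everything else --- the homogeneity bookkeeping that makes each term a monomial, the rationality of the weights, and the absorption of scalars --- is routine.
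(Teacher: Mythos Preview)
Your proof is correct and rests on the same ingredients as the paper's --- Proposition \ref{lb1} (equivalently the non-degeneracy statement underlying it) together with the irreducibility of $W_s$ --- so the two approaches are essentially the same. The paper's argument is just the constructive version of your point-separation: it fixes a nonzero $w^{(s)}\in W_s(n)$ and a dual basis $\{\wch\upalpha\}\subset\Hom_V(W_s,W_i\boxtimes W_j)$ with $\upbeta\wch\upalpha=\delta_{\upalpha,\upbeta}\id_s$, invokes Proposition \ref{lb1} directly to write $\wch\upalpha w^{(s)}=\sum_l P_n\mc L_{i,j}(w^{(i)}_l,1)w^{(j)}_l$, and then reads off $\sum_l\bk{\mc Y_\upbeta(w^{(i)}_l,1)w^{(j)}_l,w^{(\ovl s)}}=\bk{\upbeta\wch\upalpha w^{(s)},w^{(\ovl s)}}=\delta_{\upalpha,\upbeta}\bk{w^{(s)},w^{(\ovl s)}}$.
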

\begin{proof}
Choose $n\in\Cbb$ such that $W_s(n)$ is non-trivial. Choose for each $\upalpha\in\Xi^s_{i,j}$ a morphism $\wch\upalpha\in\Hom_V(W_s,W_i\boxtimes W_j)$ such that $\upalpha\wch\upbeta=\delta_{\upalpha,\upbeta}\id_s$ for any $\upalpha,\upbeta\in\Xi^s_{i,j}$. Now we fix $\upalpha\in\Xi^s_{i,j}$, and choose a non-zero vector $w^{(s)}\in W_s(n)$. By proposition \ref{lb1}, there exist homogeneous vectors $w^{(i)}_1,\dots,w^{(i)}_m\in W_i$ and $w^{(j)}_1,\dots,w^{(j)}_m\in W_j$ such that
\begin{align*}
\sum_{l=1}^m P_n\mc L_{i,j}(w^{(i)}_l,1)w^{(j)}_l=\wch\upalpha w^{(s)}.
\end{align*}
Choose $w^{(\ovl s)}\in W_{\ovl s}(n)$ such that $\bk{w^{(s)},w^{(\ovl s)}}\neq 0$. (Recall that $W_{\ovl s}(n)$ is the dual vector space of $W_s(n)$.) Then it is easy to check that the constant
\begin{align}
\sum_{l=1}^m \bk{\mc Y_{\upbeta}(w^{(i)}_l,1)w^{(j)}_l,w^{(\ovl s)}}\label{eq14}
\end{align}
is non-zero (in which case equals $\bk{w^{(s)},w^{(\ovl s)}}$) if and only if $\upbeta=\upalpha$. Now let
\begin{align*}
\lambda_l=\wt(w_l^{(s)})-\wt(w_l^{(i)})-\wt(w_l^{(j)}),
\end{align*}
where the three terms on the right hand side are the  weights (i.e. the $L_0$-eigenvalues) of the corresponding homogeneous vectors. Then \eqref{eq14} equals \eqref{eq15}. This proves the claim of this corollary.
\end{proof}

For any irreducible equivalence class of $V$-modules we choose a representing element and let them form a (finite) set $\mc E$. We assume $W_0\in\mc E$. Choose $V$-modules $W_i,W_j,W_k,W_l$. If $W_s\in\mc E$, we write $s\in\mc E$ for short. For each $r\in\mc E$, we choose  bases $\Xi_{j,k}^r$ of $\Hom_V(W_j\boxtimes W_k,W_r)$ and $\Xi_{i,r}^l$ of $\Hom_V(W_i\boxtimes W_r,W_l)$. Choose $z,\zeta\in\Cbb^\times$ with $0<|\zeta|<|z|$. Then for any $w^{(i)}\in W_i,w^{(j)}\in W_j$, and any $\upalpha\in\Xi^l_{i,r},\upbeta\in\Xi^r_{j,k}$ the product
\begin{align}
\mc Y_\upalpha(w^{(i)},z)\mc Y_\upbeta(w^{(j)},\zeta)\label{eq28}
\end{align}
converges absolutely \cite{Hua05}, in the sense that for any $w^{(k)}\in W_k,w^{(\ovl l)}\in W_{\ovl l}$, 
\begin{align*}
\sum_{n\in\Cbb}\big|\bk{\mc Y_\upalpha(w^{(i)},z)P_n\mc Y_\upbeta(w^{(j)},\zeta)w^{(k)},w^{(\ovl l)}}\big|<+\infty.
\end{align*}
Moreover, consider the expression
\begin{align}
\bk{\mc Y_\upalpha(w^{(i)},z)\mc Y_\upbeta(w^{(j)},\zeta)w^{(k)},w^{(\ovl l)}}\label{eq2}
\end{align}
as an element of $(W_i\otimes W_j\otimes W_k\otimes W_{\ovl l})^*$. (Note that this element depends also on the arguments $\arg z$ and $\arg \zeta$.) By linearity, we have a linear map
\begin{align}
\Psi_{z,\zeta}:\bigoplus_{r\in\mc E}\Hom_V(W_i\boxtimes W_r,W_l)\otimes \Hom_V(W_j\boxtimes W_k,W_r)\rightarrow(W_i\otimes W_j\otimes W_k\otimes W_{\ovl l})^*
\end{align}
sending $\upalpha\otimes\upbeta$ to the linear functional defined by \eqref{eq2}. This map is well-known to be injective. Indeed,  choose any $\fk X$ in the domain of $\Psi_{z,\zeta}$. If $\Psi_{z,\zeta}(\fk X)$ equals $0$ for one pair $(z,\zeta)$, then, by the existence of differential equations as in \cite{Hua05}, $\Psi_{z,\zeta}(\fk X)$ equals $0$ for all $z,\zeta$ satisfying $0<|\zeta|<|z|$. (See \cite{Gui19a} the paragraphs after theorem 2.4 for a detailed explanation.) Then, using proposition \ref{lb1}, it is not hard to show $\fk X=0$. (See for instance \cite{Gui19a} proposition 2.3 whose proof is in section A.2.)

Similarly, when $0<|z-\zeta|<|\zeta|$, one can define an injective linear map
\begin{align}
\Phi_{z,\zeta}:\bigoplus_{s\in\mc E}\Hom_V(W_s\boxtimes W_k,W_l)\otimes \Hom_V(W_i\boxtimes W_j,W_s)\rightarrow(W_i\otimes W_j\otimes W_k\otimes W_{\ovl l})^*\label{eq22}
\end{align}
sending each $\upgamma\otimes\updelta$ to the linear functional determined by the following iterate of intertwining operators:
\begin{align}
\bk{\mc Y_\upgamma(\mc Y_\updelta(w^{(i)},z-\zeta)w^{(j)},\zeta)w^{(k)},w^{(\ovl l)}}.
\end{align}
Again, this map depends on the choice of arguments: $\arg(z-\zeta)$ and $\arg(\zeta)$, and the above expression converges absolutely in an appropriate sense. By a deep result of \cite{Hua95,Hua05}, $\Phi_{z,\zeta}$ and $\Psi_{z,\zeta}$ have the same image. 

We now assume
\begin{gather}
0<|z-\zeta|<|\zeta|<|z|,\qquad \arg(z-\zeta)=\arg\zeta=\arg z.\label{eq8}
\end{gather}
In particular, $\zeta,z$ are on the same ray starting from the origin. If we also choose bases $\Xi^l_{s,k}$ and $\Xi^s_{i,j}$ of $\Hom_V(W_s\boxtimes W_k,W_l)$ and $\Hom_V(W_i\boxtimes W_j,W_s)$ respectively, then we have a matrix  $\{F_{\upgamma\updelta}^{\upalpha\upbeta}\}$ (the fusion matrix) representing the invertible map $\Psi_{z,\zeta}\Phi_{z,\zeta}^{-1}$. Equivalently, we have a unique number $F_{\upgamma\updelta}^{\upalpha\upbeta}$ for each $\upalpha,\upbeta,\upgamma,\updelta$ such that for each $s\in\mc E$ and each $\upgamma\in\Xi^l_{s,k},\updelta\in\Xi^s_{i,j}$, the fusion relation
\begin{align}
\mc Y_\upgamma(\mc Y_\updelta(w^{(i)},z-\zeta)w^{(j)},\zeta)=\sum_{r\in\mc E}\sum_{\upalpha\in\Xi^l_{i,r},\upbeta\in\Xi^r_{j,k}}F_{\upgamma\updelta}^{\upalpha\upbeta}\cdot\mc Y_\upalpha(w^{(i)},z)\mc Y_\upbeta(w^{(j)},\zeta)\label{eq6}
\end{align} 
holds for each $w^{(i)}\in W_i,w^{(j)}\in W_j$. This fusion matrix is independent of the particular choice of $z,\zeta$ satisfying the above mentioned conditions. The associativity isomorphisms of $\Rep(V)$ are defined in such a way that after making $\Rep(V)$ strict, we have
\begin{align}
\upgamma(\updelta\otimes\id_k)=\sum_{r\in\mc E}\sum_{\upalpha\in\Xi^l_{i,r},\upbeta\in\Xi^r_{j,k}}F_{\upgamma\updelta}^{\upalpha\upbeta}\cdot\upalpha(\id_i\otimes\upbeta),\label{eq7}
\end{align}
namely, $F$ is also an $F$-matrix of $\Rep(V)$.

\section{Fusion of annihiliation and vertex operators}\label{lb3}

Let $W_i,W_j$ be $V$-modules. For each $W_s\in\mc E$,  there is a non-degenerate bilinear form $\bk{\cdot,\cdot}$ on $\Hom_V(W_i\boxtimes W_j,W_s)\otimes \Hom_V(W_s,W_i\boxtimes W_j)$ such that if $\upalpha\in\Hom_V(W_i\boxtimes W_j,W_s)$ and $T\in \Hom_V(W_s,W_i\boxtimes W_j)$, then
\begin{align}
\upalpha T=\bk{\upalpha,T}\id_s.
\end{align}
This bilinear form gives an isomorphism
\begin{align}
\Hom_V(W_s,W_i\boxtimes W_j)\xrightarrow{\simeq}\Hom_V(W_i\boxtimes W_j,W_s)^*.\label{eq5}
\end{align}
We shall always identify $\Hom_V(W_s,W_i\boxtimes W_j)$ and $\Hom_V(W_i\boxtimes W_j,W_s)^*$ using the above isomorphism.

Recall from the last section that $\Xi^s_{i,j}$ is a basis of $\Hom_V(W_i\boxtimes W_j,W_s)$. Then we can choose a dual basis $\{\wch\upalpha:\upalpha\in\Xi^s_{i,j}\}$. Namely, for each $\upalpha\in\Xi^s_{i,j}$, we have $\wch\upalpha\in\Hom_V(W_s,W_i\boxtimes W_j)$, and if $\upbeta\in\Xi^s_{i,j}$, then $\bk{\upalpha,\wch\upbeta}=\delta_{\upalpha,\upbeta}$. So we also have
\begin{align}
\upalpha\wch\upbeta=\delta_{\upalpha,\upbeta}\id_s.\label{eq3}
\end{align}
This implies that
\begin{align}
\id_{i\boxtimes j}=\sum_{s\in\mc E}\sum_{\upalpha\in\Xi^s_{i,j}}\wch\upalpha\upalpha,\label{eq4}
\end{align}
since, by \eqref{eq3}, the left multiplications of both sides of \eqref{eq4} by any $\upbeta\in\Xi^s_{i,j}$ equal $\upbeta$.

In \cite{HK07}, Huang-Kong used the rigidity of $\Rep(V)$ to define a natural isomorphism $\Hom_V(W_{\ovl i}\boxtimes W_{\ovl j},W_{\ovl s})\xrightarrow\simeq \Hom_V(W_i\boxtimes W_j,W_s)^*$. Since $\mc V{\ovl s\choose \ovl i~\ovl j}$ is isomorphic to $\mc V{j\choose\ovl i~s}$ by sending $\mc Y$ to $\Co\mc Y$, we also have an isomorphism
\begin{gather}
\tth:\Hom_V(W_i\boxtimes W_j,W_s)^*\xrightarrow\simeq \Hom_V(W_{\ovl i}\boxtimes W_s,W_j).
\end{gather}
In the following, we review the construction of this isomorphism.

In \cite{Hua08}, Huang showed  that $\Rep(V)$ is rigid, and the (categorical) dual object of any $V$-module $W_i$ could be chosen to be the contragredient module $W_{\ovl i}$. Moreover, if we define
\begin{align*}
\ev_{\ovl i,i}\in\Hom_V(W_{\ovl i}\boxtimes W_i,V)
\end{align*}
such that
\begin{align*}
\mc Y_{\ev_{\ovl i,i}}=\Co\mc Y_{\upkappa(\ovl i)},
\end{align*}
(Recall that $\mc Y_{\upkappa(\ovl i)}$ is the creation operator of $W_{\ovl i}$, which is of type $\ovl i\choose \ovl i~0$. $\mc Y_{\ev_{\ovl i,i}}$, which is of type $0\choose \ovl i~i$, is called the \textbf{annihilation operator} of $W_i$.) then there is a (unique) morphism
\begin{align*}
\coev_{i,\ovl i}\in\Hom_V(V,W_i\boxtimes W_{\ovl i})
\end{align*}
satisfying the conjugate equations
\begin{gather*}
(\id_i\otimes\ev_{\ovl i,i})(\coev_{i,\ovl i}\otimes\id_i)=\id_i,\\
(\ev_{\ovl i,i}\otimes\id_{\ovl i})(\id_{\ovl i}\otimes\coev_{i,\ovl i})=\id_{\ovl i}.
\end{gather*}
This is also true for $W_{\ovl i}$. Thus we have $\ev_{i,\ovl i}$ and $\coev_{\ovl i,i}$ defined by $\ev_{i,\ovl i}=\ev_{\ovl{\ovl i},\ovl i}$ and $\coev_{\ovl i,i}=\coev_{\ovl i,\ovl{\ovl i}}$. 

Recall the identification \eqref{eq5}. We define
\begin{gather}
\tth:\Hom_V(W_s,W_i\boxtimes W_j)\xrightarrow\simeq \Hom_V(W_{\ovl i}\boxtimes W_s,W_j),\nonumber\\
T\mapsto \tth(T)=(\ev_{\ovl i,i}\otimes\id_j)(\id_{\ovl i}\otimes T).
\end{gather}
That $\tth$ is an isomorphism follows from the conjugate equations. Using the definition of $\tth$ and equation \eqref{eq4}, it is easy to see
\begin{align}
\ev_{\ovl i,i}\otimes\id_j=\sum_{s\in\mc E}\sum_{\upalpha\in\Xi^s_{i,j}}\tth(\wch\upalpha)(\id_{\ovl i}\otimes\upalpha).
\end{align}
Thus, by \eqref{eq6} and \eqref{eq7}, we have the following fusion relation which will play an important role in later sections: Let $z,\zeta\in\Cbb$ satisfy \eqref{eq8}. Then for any $w^{(i)}\in W_i,w^{(\ovl i)}\in W_{\ovl i}$,
\begin{align}
Y_j(\mc Y_{\ev_{\ovl i,i}}(w^{(\ovl i)},z-\zeta)w^{(i)},\zeta)=\sum_{s\in\mc E}\sum_{\upalpha\in\Xi^s_{i,j}}\mc Y_{\tth(\wch\upalpha)}(w^{(\ovl i)},z)\mc Y_\upalpha(w^{(i)},\zeta).\label{eq9}
\end{align}
Note that for each $s\in\mc E$, $\{\tth(\wch\upalpha):\upalpha\in\Xi^s_{i,j}\}$ is a basis of $\Hom_V(W_{\ovl i}\boxtimes W_s,W_j)$. Roughly speaking, this fusion relation says that any intertwining operator arises from fusing the annihilation operators and the vertex operators. This is parallel to the fact that any $V$-module character occurs in the sum resulting from the modular transformation $\tau\mapsto -1/\tau$ of the vacuum module character.

\section{Compressions of intertwining operators}

Assume that $V$ is a vertex operator subalgebra (sub-VOA for short) of another CFT-type VOA $U$ with vertex operator $Y^U$ and conformal vector $\omega$. This means that $V$ is a subspace of $U$, $V$ and $U$ share the same vacuum vector $\id$, and that $Y^U(v_1,z)v_2=Y(v_1,z)v_2$ when $v_1,v_2\in V$. Let $L^U_n=Y^U(\omega)_{n+1}$. We shall always assume the additional condition that
\begin{align}
L^U_0\nu=2\nu,\qquad L^U_1\nu=0.\label{eq16}
\end{align}
Then by \cite{FZ92} or \cite{LL12} theorem 3.11.12,  $(V^c,Y',\id,\nu')$ is a sub-VOA of $U$, where $V^c$ is the set of all $u\in U$ such that $Y(v)_nu=0$ for all $v\in V$ and $n\in\Nbb$, $Y'$ is the restriction of $Y^U$ to $V^c$, and $\nu'=\omega-\nu$. We set $L'_n=Y'(\nu')_{n+1}$. 

Assume that $V^c$ is self-dual, CFT-type, and regular. Then $V\otimes V^c$ is also CFT-type and self-dual (and also regular). Thus it is simple. Therefore, the homomorphism of $V\otimes V^c$-modules
\begin{gather}
V\otimes V^c\rightarrow U,\qquad v\otimes v'\mapsto Y(v)_{-1}Y(v')_{-1}\id\label{eq10}
\end{gather}
(cf. \cite{LL12} proposition 3.12.7) must be injective. Thus, we can regard $V\otimes V^c$ as a \emph{conformal} sub-VOA of $U$ sharing the same conformal vector $\omega=\nu+\nu'=\nu\otimes\id+\id\otimes\nu'$. Note that by the  identification $v\otimes v'=Y(v)_{-1}Y(v')_{-1}\id$, we have $\id=\id\otimes \id,v=v\otimes\id,v'=\id\otimes v'$.

Recall that by \cite{FHL93} chapter 4, any irreducible $V\otimes V^c$-module is the tensor product of a $V$-module and a $V^c$-module. Moreover, by \cite{ADL05} theorem 2.10, any irreducible intertwining operator of $V\otimes V^c$ can be written as a sum of tensor products of irreducible intertwining operators of $V$ and of $V^c$. Therefore, any $U$-module, considered as a $V\otimes V^c$-module, is a direct sum of those of the form $W_i\otimes W_{i'}$, where $W_i$ is an irreducible $V$-module and $W_{i'}$ is an irreducible $V^c$-module. Theorem 2.10 also implies that  any intertwining operator of $U$ can be decomposed as a sum of $\mc Y_\upalpha\otimes\mc Y_{\upalpha'}$, where $\mc Y_\upalpha$ and $\mc Y_{\upalpha'}$ are irreducible intertwining operators of $V$ and $V^c$ respectively.

In the following, $W_I,W_J,W_K,\dots$ will denote $U$-modules, and $W_{i'},W_{j'},W_{k'},\dots$ will denote $V^c$-modules.  $\mc V^U{K\choose I~J}$ and $\mc V'{k'\choose i'~j'}$ will denote the corresponding vector spaces of intertwining operators of $U$ and $V^c$ respectively. Note that $W_I$ can not be regarded as a $V$-module (unless when $\omega=\nu$) but only as a weak $V$-module (see \cite{DLM97} for the definition.)

\begin{df}
Let $W_i$ be an irreducible $V$-module and $W_I$ be a $U$-module. Let $\varphi:W_i\rightarrow W_I$ and $\psi:W_I\rightarrow W_i$ be homomorphisms of weak $V$-modules, i.e., they intertwine the actions of $V$. We say that $\varphi$ is \textbf{grading-preserving} if $\varphi$ maps each $L_0$-eigenspace of $W_i$ into an $L^U_0$-eigenspace of $W_I$. We say that $\psi$ is \textbf{grading-preserving} if the preimage under $\psi$ of any $L^U_0$-eigenspace of $W_I$ is contained in an $L_0$-eigenspace of $W_i$.
\end{df}

\begin{rem}
We have seen that there is an identification of $V\otimes V^c$-modules:
\begin{align}
W_I\simeq\bigoplus_{s\in\mc E}W_s\otimes W_{\sigma(s)}\label{eq11}
\end{align}
where for each $s\in\mc E$, $W_{\sigma(s)}$ is a (non-necessarily irreducible) $V^c$-module.  We can also regard \eqref{eq11} as a decomposition of $W_I$ into irreducible weak $V$-modules, where for each $s\in\mc E$,  $W_{\sigma(s)}$ is the multiplicity space of $W_s$. (Note that different elements in $\mc E$ give rise to non-equivalent irreducible modules.) Choose any $s\in\mc E$. Choose $\varphi:W_s\rightarrow W_I$ and $\psi:W_I\rightarrow W_s$ to be  homomorphisms of weak $V$-modules. Then it is not hard to see that we can find $w^{(\sigma(s))}\in W_{\sigma(s)}$ and $\varpi\in W_{\sigma(s)}^*$ (note that $W_{\sigma(s)}^*$ is the dual vector space of $W_{\sigma(s)}$) such that
\begin{gather}
\varphi =\id_s\otimes w^{(\sigma(s))},\qquad \psi=\id_s\otimes \varpi,
\end{gather}
where $w^{(\sigma(s))}$ is considered as the linear map $\Cbb\rightarrow W_{\sigma(s)}$ sending $1$ to $w^{(\sigma(s))}$.\footnote{To see that $\varphi$ can be written in this way, choose any $t\in\mc E$ and $\upomega\in W_{\sigma(t)}^*$, and consider the  homomorphism of irreducible $V$-modules $T_\upomega:W_s\rightarrow W_t$ defined by $T_\upomega=(\id_t\otimes\upomega)\circ\varphi$. Then $T_\upomega=0$ whenever $s\neq t$ (since $W_s\nsimeq W_t$). So the image of $\varphi$ is in $W_s\otimes W_{\sigma(s)}$. Now assume $t=s$. Then $T_\upomega$ is a scalar. Choose a basis $\{e_1,e_2,\dots\}$ of $W_s$, and write $\varphi(e_1)=\sum_n e_n\otimes w_n$ where each $w_n$ is in $W_{\sigma(s)}$. Then, for each $\upomega$, $T_\upomega(e_1)=\sum_n\upomega(w_n)e_n$ is a scalar multiple of $e_1$, which shows that $w_n=0$ when $n>1$. Thus $\varphi=\id_s\otimes w_1$. That $\psi$ has the desired form can be proved similarly.} Moreover, outside $W_s\otimes W_{\sigma(s)}$, $\id_s\otimes \varpi$ is defined to be the zero functional. Thus, it is clear that \emph{$\varphi$ (resp. $\psi$)  is grading-preserving if any only if $w^{(\sigma(s))}$ (resp. $\varpi$) equals an ($L'_0$-) homogeneous vector  of $W_{\sigma(s)}$ (resp. $W_{\ovl{\sigma(s)}}$)}.
\end{rem}	

\begin{df}
Let $W_i$ be an irreducible $V$-module and $W_I$ be a $U$-module. We say that $W_i$ is a \textbf{compression} of $W_I$ if $W_I$ (considered as a weak $V$-module) has an irreducible weak $V$-submodule isomorphic to $W_i$. Equivalently, the $V\otimes V^c$-module $W_I$ has a (non-trivial) irreducible submodule isomorphic to $W_i\otimes W_{i'}$ for some irreducible $V^c$-module $W_{i'}$.
\end{df}

\begin{df}\label{lb8}
Let $\mc Y\in\mc V{k\choose i~j}$ be an irreducible intertwining operator of $V$.

(a) Let $\mc Y^U\in\mc V^U{K\choose I~J}$ be an intertwining operator of $U$. We say that $\mc Y$ is a \textbf{compression} of $\mc Y^U$, if there exit $\lambda\in\mbb Q$ and grading-preserving homomorphisms of weak $V$-modules $\varphi:W_i\rightarrow W_I$, $\phi:W_j\rightarrow W_J$, and $\psi:W_K\rightarrow W_k$, such that for any $w^{(i)}\in W_i$ and $z\in\Cbb^\times$
\begin{align*}
\mc Y(w^{(i)},z)=z^\lambda\cdot \psi\mc Y^U(\varphi w^{(i)},z)\phi.
\end{align*}

(b) If $W_I,W_J$ are $U$-modules, we say that $\mc Y$ is a \textbf{compression of type $\blt\choose I~J$ intertwining operators of $U$}, if $\mc Y$ is a (finite) sum of compressions of intertwining operators of $U$ whose charge spaces are $W_I$ and source spaces are $W_J$.

(c) If $\mc Y$ is a (finite) sum of compressions of intertwining operators of $U$, we simply say that $\mc Y$ is a \textbf{compression of  intertwining operators of $U$}
\end{df}

\begin{pp}\label{lb4}
Let $W_I,W_J,W_K$ be  $U$-modules with $V\otimes V^c$-irreducible decompositions
\begin{gather*}
W_I\simeq\bigoplus W_i\otimes W_{i'},\qquad W_J\simeq\bigoplus W_j\otimes W_{j'},\qquad W_K\simeq\bigoplus W_k\otimes W_{k'}.
\end{gather*}
Then, according to these decompositions, any $\mc Y^U\in\mc V^U{K\choose I~J}$ can be written as a sum of elements of the form $\mc Y\otimes \mc Y'$, where $\mc Y\in\mc V{k\choose i~j}$ is the compression of a type $K\choose I~J$ intertwining operator of $U$,  and $\mc Y'$ is an irreducible intertwining operator of $V^c$.
\end{pp}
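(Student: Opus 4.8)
The plan is to restrict $\mc Y^U$ to the conformal sub-VOA $V\otimes V^c$, decompose it componentwise by the results of \cite{ADL05} already recalled above, and then use corollary \ref{lb2} to exhibit each $V$-factor as a finite sum of compressions of $\mc Y^U$. First I would note that, since $W_I,W_J,W_K$ are $U$-modules and hence $V\otimes V^c$-modules, and since the $U$-Jacobi identity restricts to the $V\otimes V^c$-Jacobi identity while $L^U_{-1}$ is the $L_{-1}$ of $V\otimes V^c$ (because $\omega=\nu+\nu'$ is the common conformal vector), the operator $\mc Y^U$ is an intertwining operator of $V\otimes V^c$ of type $K\choose I~J$. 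Using the given decompositions and \cite{ADL05} theorem 2.10, $\mc Y^U$ is a finite sum of components, one for each triple of irreducible summands $(W_i\otimes W_{i'},W_j\otimes W_{j'},W_k\otimes W_{k'})$, each component lying in $\mc V{k\choose i~j}\otimes\mc V'{k'\choose i'~j'}$. It therefore suffices to treat one nonzero component $\mc Z$. Fixing a basis $\{\mc Y'_q\}$ of $\mc V'{k'\choose i'~j'}$, I write $\mc Z=\sum_q\mc X_q\otimes\mc Y'_q$ with $\mc X_q\in\mc V{k\choose i~j}$; each $\mc Y'_q$ is automatically an irreducible intertwining operator of $V^c$ (its charge, source, target modules being irreducible), so the entire content is to show that each $\mc X_{q_0}$ is a compression of $\mc Y^U$.

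To isolate $\mc X_{q_0}$ I would work on the $V^c$-side. Since $\mc V'{k'\choose i'~j'}\neq 0$ and $V^c$ is regular (so its module category is semisimple), $W_{k'}$ is isomorphic to an irreducible submodule of $W_{i'}\boxtimes W_{j'}$, and corollary \ref{lb2} applies: it furnishes homogeneous vectors $w^{(i')}_1,\dots,w^{(i')}_m\in W_{i'}$, $w^{(j')}_1,\dots,w^{(j')}_m\in W_{j'}$, a vector $w^{(\ovl{k'})}\in W_{\ovl{k'}}$, and rationals $\lambda_1,\dots,\lambda_m$ so that $\sum_l z^{\lambda_l}\bk{\mc Y'_q(w^{(i')}_l,z)w^{(j')}_l,w^{(\ovl{k'})}}$ equals the constant $\delta_{q,q_0}C$ with $C\neq 0$. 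I then set $\varphi_l=\id_i\otimes w^{(i')}_l\colon W_i\to W_I$, $\phi_l=\id_j\otimes w^{(j')}_l\colon W_j\to W_J$, and $\psi_l=\id_k\otimes\bk{\cdot,w^{(\ovl{k'})}}\colon W_K\to W_k$, which are grading-preserving homomorphisms of weak $V$-modules by the remark, the inserted and evaluated vectors being homogeneous. Expanding $\mc Z=\sum_q\mc X_q\otimes\mc Y'_q$ gives $\psi_l\mc Y^U(\varphi_l w^{(i)},z)\phi_l w^{(j)}=\sum_q\mc X_q(w^{(i)},z)w^{(j)}\cdot\bk{\mc Y'_q(w^{(i')}_l,z)w^{(j')}_l,w^{(\ovl{k'})}}$, and since each $V^c$-pairing is a single monomial $b_{q,l}z^{\rho_l}$ with $\rho_l=-\lambda_l$ (this is exactly the weight bookkeeping that makes $z^{\lambda_l}$ times the pairing constant), the factor $z^{\lambda_l}$ cancels the $z$-power and each $z^{\lambda_l}\psi_l\mc Y^U(\varphi_l\cdot,z)\phi_l=\sum_q b_{q,l}\mc X_q$ is a genuine intertwining operator of $V$, hence a single compression of $\mc Y^U$. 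Summing over $l$ collapses the pairings to $\delta_{q,q_0}C$, so that
\[
\sum_{l=1}^m z^{\lambda_l}\,\psi_l\,\mc Y^U(\varphi_l w^{(i)},z)\,\phi_l w^{(j)}=C\cdot\mc X_{q_0}(w^{(i)},z)w^{(j)}.
\]
Thus $\mc X_{q_0}=\tfrac1C\sum_l(\text{compression})$ is a finite sum of compressions of $\mc Y^U$, i.e. a compression of type $K\choose I~J$ intertwining operators in the sense of Definition \ref{lb8}. Assembling over $q$ and over the finitely many triples gives $\mc Y^U=\sum\mc X_q\otimes\mc Y'_q$ in the desired form.

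The main obstacle is precisely this isolation step: a single choice $(w^{(i')},w^{(j')},w^{(\ovl{k'})})$ yields only a rank-one functional on $\mc V'{k'\choose i'~j'}$ and in general cannot single out one basis vector $\mc Y'_{q_0}$, so one genuinely needs the linear combination supplied by corollary \ref{lb2} — this is exactly why a compression must be permitted to be a finite sum. The secondary point to verify carefully is that each summand $z^{\lambda_l}\psi_l\mc Y^U(\varphi_l\cdot,z)\phi_l$ obeys the intertwining operator axioms, in particular the $L_{-1}$-derivative property; this reduces to the identity $\lambda_l=\wt(w^{(i')}_l)+\wt(w^{(j')}_l)-\wt(w^{(\ovl{k'})})$ dictated by corollary \ref{lb2}, which is what guarantees the compensating factor $z^{\lambda_l}$ removes the spurious power of $z$.
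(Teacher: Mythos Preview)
Your proposal is correct and follows essentially the same route as the paper's proof: restrict $\mc Y^U$ to a fixed triple of irreducible $V\otimes V^c$-summands, invoke \cite{ADL05} theorem 2.10 to write the restriction as $\sum_q \mc X_q\otimes\mc Y'_q$ over a basis of $\mc V'{k'\choose i'~j'}$, then apply corollary \ref{lb2} on the $V^c$-side to produce homogeneous test vectors that isolate a single $q_0$ after summing over $l$. The grading-preserving maps $\varphi_l,\phi_l,\psi$ you build are exactly those of the paper (note your $\psi_l$ does not actually depend on $l$; the paper accordingly writes it without subscript), and your final identity $\sum_l z^{\lambda_l}\psi\,\mc Y^U(\varphi_l\cdot,z)\phi_l=C\,\mc X_{q_0}$ matches the paper's displayed formula up to the normalization $C=1$.
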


\begin{proof}
We fix irreducible $V\otimes V^c$-submodules $W_i\otimes W_{i'},W_j\otimes W_{j'},W_k\otimes W_{k'}$ of $W_I,W_J,W_k$ respectively. Let $\Theta^{k'}_{i',j'}$ be a basis of $\Hom_{V^c}(W_{i'}\boxtimes W_{j'},W_{k'})$. We have a (functorial) isomorphism
\begin{align}
\Hom_{V^c}(W_{i'}\boxtimes W_{j'},W_{k'})\xrightarrow{\simeq} \mc V'{k'\choose i'~j'},\qquad \upalpha'\mapsto\mc Y'_{\upalpha'}\label{eq21}
\end{align} 
similar to \eqref{eq12}. Consider $\mc Y^U$ as an intertwining operator of $V\otimes V^c$, and restrict it to $W_i\otimes W_{i'},W_j\otimes W_{j'},W_k\otimes W_{k'}$. Then, by \cite{ADL05} theorem 2.10, this restriction is a sum of tensor products of $V$- and $V^c$-intertwining operators. Assume without loss of generality that this restriction is non-zero. Then $W_k,W_{k'}$ must be irreducible submodules of $W_i\boxtimes W_j,W_{i'}\boxtimes W_{j'}$ respectively. Now, for each $\upalpha'\in\Theta^{k'}_{i',j'}$, we can find $\upalpha\in\Hom_V(W_i\boxtimes W_j,W_k)$ (not necessarily in $\Xi^k_{i,j}$) such that the restriction of $\mc Y^U$   equals
\begin{align*}
\sum_{\upalpha'\in\Theta^{k'}_{i',j'}}\mc Y_\upalpha\otimes\mc Y'_{\upalpha'}.
\end{align*}
We shall show that each $\mc Y_\upalpha$ is a sum of  compressions of  type $K\choose I~J$ intertwining operators of $U$.

Choose $\upalpha'\in\Theta^{k'}_{i',j'}$ and apply corollary \ref{lb2} to $V^c$. Then there exist homogeneous vectors $w^{(i')}_1,\dots,w^{(i')}_m\in W_{i'}$,  $w^{(j')}_1,\dots,w^{(j')}_m\in W_{j'}$, $w^{(\ovl {k'})}\in W_{\ovl {k'}}$, and constants $\lambda_1,\dots,\lambda_m\in\mbb Q$, such that for any $\upbeta'\in\Theta^{k'}_{i',j'}$, the expression
\begin{align}
\sum_{l=1}^m z^{\lambda_l}\bk{\mc Y'_{\upbeta'}(w^{(i')}_l,z)w^{(j')}_l,w^{(\ovl {k'})}}\label{eq13}
\end{align}
is a constant (over the complex variable $z$), and this constant is non-zero if and only if $\upbeta'=\upalpha'$. By scaling the vector $w^{(\ovl{k'})}$, we may assume that when $\upbeta'=\upalpha'$, the above constant is $1$. Now, for each $l=1,2,\dots,m$, define grading-preserving homomorphisms of weak $V$-modules $\varphi_l:W_i\rightarrow W_I,\phi_l:W_j\rightarrow W_J,\psi:W_K\rightarrow W_k$ by
\begin{align*}
\varphi_l=\id_i\otimes w_l^{(i')},\qquad \phi_l=\id_j\otimes w_l^{(j')},\qquad \psi=\id_k\otimes w^{(\ovl{k'})}.
\end{align*}
Then we have
\begin{align*}
\mc Y_\upalpha(w^{(i)},z)=\sum_{l=1}^mz^{\lambda_l}\cdot \psi\mc Y^U(\varphi_lw^{(i)},z)\phi_l.
\end{align*}
This finishes the proof.
\end{proof}

\section{Proof of the main result}

In this section, we assume that $V^{cc}=V$. Let $W_I,W_J$ be $V$-modules, and we fix irreducible decompositions of $V\otimes V^c$-modules:
\begin{gather}
U\simeq \big(V\otimes V^c\big)\oplus\Big(\bigoplus W_a\otimes W_{a'}\Big),\label{eq17}\\
W_I\simeq \big(V\otimes V^c\big)\oplus\Big(\bigoplus W_i\otimes W_{i'}\Big),\\
W_J\simeq \big(V\otimes V^c\big)\oplus\Big(\bigoplus W_j\otimes W_{j'}\Big).
\end{gather}
We first recall the following obvious fact.

\begin{pp}
If $W_a\otimes W_{a'}$ is an irreducible $V\otimes V^c$-submodule of $U\ominus (V\otimes V^c)$, then $W_a$ is not isomorphic to $V$ and $W_{a'}$ is not isomorphic to $V^c$.\label{lb5}
\end{pp}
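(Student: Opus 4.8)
The plan is to establish the two non-isomorphisms separately, the common engine being that the vacuum summand $V\otimes V^c$ occurs in $U$ with multiplicity exactly one. Indeed, a homomorphism of $V\otimes V^c$-modules $V\otimes V^c\to U$ is determined by the image of $\id\otimes\id$, which has $L^U_0$-weight $0$ and hence lies in $U(0)=\Cbb\id$ because $U$ is CFT-type; so $\Hom_{V\otimes V^c}(V\otimes V^c,U)$ is one-dimensional, spanned by the given inclusion. Consequently the first summand in \eqref{eq17} is the only copy of $V\otimes V^c$ inside $U$, and no irreducible summand of $U\ominus(V\otimes V^c)$ can be isomorphic to $V\otimes V^c$.

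First I would prove $W_a\not\simeq V$. Suppose to the contrary that some irreducible $V\otimes V^c$-submodule $N\subseteq U\ominus(V\otimes V^c)$ satisfies $N\simeq W_a\otimes W_{a'}$ with $W_a\simeq V=W_0$. Set $N_0=N\cap V^c=\{u\in N:Y(v)_nu=0\text{ for all }v\in V,\ n\in\Nbb\}$. Since, inside $V$, the vectors killed by all $Y(v)_n$ with $n\in\Nbb$ form exactly $V(0)=\Cbb\id$ (note $Y(\nu)_1=L_0$ forces weight $0$), the isomorphism $N\simeq V\otimes W_{a'}$ identifies $N_0$ with $\id\otimes W_{a'}$, which is nonzero. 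By the very definition of the commutant, $N_0\subseteq V^c$. Moreover the $V^c$-action $Y'$ on $N$ operates through the $W_{a'}$-factor and hence preserves $N_0$, so $N_0$ is a nonzero $V^c$-submodule of the vacuum $V^c$-module $V^c$. As $V^c$ is CFT-type and self-dual, it is simple, hence irreducible over itself; therefore $N_0=V^c$ and $W_{a'}\simeq N_0\simeq V^c$. But then $N\simeq V\otimes V^c$, contradicting the multiplicity-one statement above.

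Then I would prove $W_{a'}\not\simeq V^c$ by the symmetric argument with the roles of $V$ and $V^c$ interchanged, and this is exactly where the standing hypothesis $V^{cc}=V$ of the section enters: the commutant of $V^c$ in $U$ is $V^{cc}$, and one needs it to equal $V$ in order to land the $V^c$-lowest-weight vectors $W_a\otimes\id$ of a copy $N\simeq W_a\otimes V^c$ inside the simple VOA $V$ and thereby conclude $W_a\simeq V$, once again forcing $N\simeq V\otimes V^c$ and the same contradiction. The only step requiring genuine care — though the paper rightly calls the statement obvious — is the identification of $N_0$ with $\id\otimes W_{a'}$ and the verification that it is a $V^c$-\emph{submodule} of $V^c$ rather than a mere subspace; both rest on the fact that $V$ and $V^c$ act independently on the two tensor factors and that $\id$ spans the space of $V$-lowest-weight vectors of the vacuum module. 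Everything else is unwinding the definitions of the commutant and of CFT-type.
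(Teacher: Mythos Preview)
Your argument is correct, and it shares with the paper's proof the key move of using the definition of the commutant to locate certain vectors of $N$ inside $V^c$ (or $V^{cc}$). However, the paper's route is shorter and avoids both the multiplicity-one lemma and the simplicity of $V^c$. The paper argues the direction $W_{a'}\not\simeq V^c$ first: if $W_{a'}\simeq V^c$, let $w_2\in W_{a'}$ correspond to the vacuum of $V^c$ and pick any nonzero $w_1\in W_a$; then $Y^U(v')_n(w_1\otimes w_2)=w_1\otimes Y'(v')_n w_2=0$ for all $v'\in V^c$ and $n\ge 0$, so $w_1\otimes w_2\in V^{cc}=V\subseteq V\otimes V^c$, an immediate contradiction since this nonzero vector also lies in $N\subseteq U\ominus(V\otimes V^c)$. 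The other direction follows by the symmetric argument using $V^{ccc}=V^c$. Your version instead bootstraps from $N_0\subseteq V^c$ all the way up to $N\simeq V\otimes V^c$ via simplicity, and only then invokes your multiplicity-one observation; that works, but you could have stopped as soon as you noticed $0\neq N_0\subseteq V^c\cap N\subseteq (V\otimes V^c)\cap\bigl(U\ominus(V\otimes V^c)\bigr)=0$. The two approaches also handle the two halves in opposite order, which is why you attribute the use of $V^{cc}=V$ to the second half while the paper uses it in the first.
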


\begin{proof}
If $W_{a'}$ is isomorphic to $V^c$, then $W_{a'}$ contains a non-zero homogeneous vector $w_2$ equivalent to the vacuum vector of $V^c$. Choose any non-zero $w_1\in W_a$. Then for any $v'\in V^c$ and $n\in\Nbb$, $Y'(v')_nw_2=0$. Therefore $Y^U(v')_n(w_1\otimes w_2)=w_1\otimes Y'(v')_nw_2=0$. Thus $w_1\otimes w_2\in V^{cc}=V=V\otimes\id$, which is impossible since $w_1\otimes w_2$ is not in $V\otimes V^c$. So $W_{a'}$ is not isomorphic to $V^c$. Since $V^{ccc}=V^c$, for a similar reason, $W_a$ is also not isomorphic to $V$.
\end{proof}

For each irreducible $V^c$-module, we choose a representing element, and let them form a finite set $\mc E'$. Assume $W_{0'}:=V^c$ is in $\mc E'$. If $W_{i'},W_{j'},W_{k'}$ are  $V^c$-modules, we choose a basis $\Theta^k_{i,j}$ of $\Hom_{V^c}(W_{i'}\boxtimes W_{j'},W_{k'})$. The linear isomorphism
\begin{gather}
\tth:\Hom_{V^c}(W_{i'}\boxtimes W_{j'},W_{k'})^*\xrightarrow\simeq \Hom_{V^c}(W_{\ovl {i'}}\boxtimes W_{k'},W_{j'})
\end{gather}
and the morphism
\begin{gather*}
\ev_{\ovl{i'},i'}\in\Hom_{V^c}(W_{\ovl{i'}}\boxtimes W_{i'},V^c)
\end{gather*}
are defined as in section \ref{lb3}. Then $\mc Y'_{\ev_{\ovl{i'},i'}}$ is the annihilation operator of $W_{i'}$.

According to the decomposition for $W_I$, $W_{\ovl I}$ also has the corresponding decomposition:
\begin{gather*}
W_{\ovl I}\simeq \big(V\otimes V^c\big)\oplus\Big(\bigoplus W_{\ovl i}\otimes W_{\ovl{i'}}\Big).
\end{gather*}
Let
\begin{gather*}
\ev_{\ovl I,I}\in\Hom_U(W_{\ovl I}\boxtimes W_I,U)
\end{gather*}
which corresponds to the annihilation operator $\mc Y^U_{\ev_{\ovl I,I}}$ of the $U$-module $W_I$. Suppose that in the above decompositions, $W_i\boxtimes W_{i'}$ is an irreducible submodule of $W_I\ominus(V\otimes V^c)$. If we regard $\mc Y^U_{\ev_{\ovl I,I}}$ as an intertwining operator of $V\otimes V^c$, then it is easy to see that the restriction of $\mc Y^U_{\ev_{\ovl I,I}}$ to the charge subspace $W_{\ovl i}\boxtimes W_{\ovl{i'}}$ source subspace $W_i\boxtimes W_{i'}$ and the target subspace $V\otimes V^c$ is $\mc Y_{\ev_{\ovl i,i}}\otimes \mc Y'_{\ev_{\ovl{i'},i'}}$, the tensor product of the annihilation operators of $W_i$ and of $W_{i'}$.

\begin{thm}
Let $V$ be a vertex operator subalgebra of $U$ satisfying \eqref{eq16}. Assume that $U$, $V$, and  $V^c$ are CFT type, self-dual, and regular VOAs. Assume also that $V^{cc}=V$. Let $W_I,W_J$ be $U$-modules. Let $W_i,W_j$ be irreducible $V$-modules that are compressions of $W_I$ and $W_J$ respectively. Then any irreducible intertwining operator of $V$ with charge space $W_i$ and source space $W_j$ is a compression of type $\blt\choose I~J$ intertwining operators of $U$.
\end{thm}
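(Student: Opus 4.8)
The plan is to reduce the statement to a surjectivity assertion and then establish it using the $U$-analogue of the fusion relation \eqref{eq9}. By Proposition \ref{lb4} — more precisely, by the isolation technique in its proof, which invokes Corollary \ref{lb2} for $V^c$ — every $V$-intertwining operator that occurs as a ``$V$-factor'' in the $V\otimes V^c$-restriction of some $\mc Y^U_A\in\mc V^U{S\choose I~J}$ is automatically a compression of type $\blt\choose I~J$ intertwining operators of $U$. Hence it suffices to show that, as $S$, $A$ and the relevant $V\otimes V^c$-submodules vary, these $V$-factors span $\bigoplus_s\mc V{s\choose i~j}$. Writing $\mc C_s$ for the span of the compressions inside $\mc V{s\choose i~j}$, I must prove $\mc C_s=\mc V{s\choose i~j}$ for every $s$.

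First I would write down the $U$-version of \eqref{eq9},
\begin{align*}
Y^U_J\big(\mc Y^U_{\ev_{\ovl I,I}}(w^{(\ovl I)},z-\zeta)w^{(I)},\zeta\big)=\sum_{S}\sum_{A\in\Xi^S_{I,J}}\mc Y^U_{\tth(\wch A)}(w^{(\ovl I)},z)\,\mc Y^U_A(w^{(I)},\zeta),
\end{align*}
and compress both sides as $V\otimes V^c$-objects, restricting the charge slots to $W_{\ovl i}\otimes W_{\ovl{i'}}$ and $W_i\otimes W_{i'}$, the source to $W_j\otimes W_{j'}$, and projecting the target onto $W_j\otimes W_{j'}$. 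Regarded as an iterate of $V\otimes V^c$-intertwining operators, the left-hand side is a sum over the irreducible $V\otimes V^c$-summands (the ``channels'') of the middle module $U=(V\otimes V^c)\oplus\bigoplus W_a\otimes W_{a'}$. Using the restriction of $\mc Y^U_{\ev_{\ovl I,I}}$ recorded just before the theorem, together with the fact that $Y^U_J$ restricted to $V\otimes V^c$-charge is $Y_j\otimes Y'_{j'}$, the vacuum channel $V\otimes V^c$ contributes exactly
\begin{align*}
\big[Y_j(\mc Y_{\ev_{\ovl i,i}}(w^{(\ovl i)},z-\zeta)w^{(i)},\zeta)\big]\otimes\big[Y'_{j'}(\mc Y'_{\ev_{\ovl{i'},i'}}(w^{(\ovl{i'})},z-\zeta)w^{(i')},\zeta)\big].
\end{align*}
On the right-hand side, Proposition \ref{lb4} guarantees that after compression every inner factor $\mc Y^U_A$ becomes a sum of terms $\mc Y_\upalpha\otimes\mc Y'_{\upalpha'}$ with each $\mc Y_\upalpha\in\mc C_s$.

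The crux is to isolate the vacuum channel, and here is where $V^{cc}=V$ enters through Proposition \ref{lb5}: every extra summand has $W_{a'}\not\simeq V^c$, so $V\otimes V^c$ occurs in $U$ with multiplicity one, and the vacuum channel is the \emph{unique} channel whose $V^c$-middle module is isomorphic to $V^c$. Working in the $V^c$-iterate picture, where intertwining operators attached to distinct middle modules are linearly independent by the injectivity of the $V^c$-analogues of $\Phi_{z,\zeta}$ and $\Psi_{z,\zeta}$, I project onto the component whose $V^c$-middle module is $V^c$; this annihilates all extra channels and retains precisely the vacuum channel. On the isolated component the $V^c$-part is fixed, while by the $V$-fusion relation \eqref{eq9} the $V$-part equals $\sum_{s}\sum_{\upalpha\in\Xi^s_{i,j}}\mc Y_{\tth(\wch\upalpha)}\otimes\mc Y_\upalpha$. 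Under the identification $\mc V{j\choose\ovl i~s}\simeq\mc V{s\choose i~j}^*$ furnished by $\tth$ together with \eqref{eq5}, this is exactly the identity element of $\End\big(\mc V{s\choose i~j}\big)$. But the same projection applied to the right-hand side lands in $\mc V{j\choose\ovl i~s}\otimes\mc C_s\simeq\mc V{s\choose i~j}^*\otimes\mc C_s$. Since the identity endomorphism lies in $\mc V{s\choose i~j}^*\otimes\mc C_s$ only when $\mc C_s=\mc V{s\choose i~j}$, this forces $\mc C_s=\mc V{s\choose i~j}$, which is the desired conclusion.

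The main obstacle is precisely this isolation step. The subtlety is that once the middle module $U$ is resolved into $V\otimes V^c$-channels and everything is expanded in the product basis, the non-vacuum channels $W_a\otimes W_{a'}$ produce inner $V$-factors of the same type $s\choose i~j$ as the vacuum channel, so they cannot be discarded by a naive comparison of coefficients. The clean way around this is the multiplicity-one statement of Proposition \ref{lb5}, which makes the $V^c$-middle module a faithful label that separates the vacuum channel from the rest. I would also need to verify carefully that compressing $Y^U_J$ and $\mc Y^U_{\ev_{\ovl I,I}}$ along the vacuum channel reproduces the $V$- and $V^c$-vertex and annihilation operators with unit coefficient, which is where self-duality and the CFT-type hypothesis are used.
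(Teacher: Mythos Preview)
Your proposal is correct and follows essentially the same strategy as the paper's proof: both compute the compressed iterate $\uppi_{j\otimes j'}\cdot Y^U_J\big(\mc Y^U_{\ev_{\ovl I,I}}(\cdot,z-\zeta)\cdot,\zeta\big)$ in two ways, use Proposition~\ref{lb4} on the product side to force the inner $V$-factor into the compression subspace, use Proposition~\ref{lb5} to isolate the $V^c$-vacuum channel on the iterate side via the injectivity of $\Phi_{z,\zeta}$ for $V^c$, and then read off \eqref{eq9} for $V$ on that channel. The only difference is packaging: the paper argues by contradiction, choosing a basis of $\Hom_V(W_i\boxtimes W_j,W_k)$ adapted to $\mc C_k$ with a distinguished element $\fk A$ outside $\mc C_k$ and showing that the coefficient $\lambda_{\fk A,\fk A,\upalpha',\upbeta'}$ is simultaneously zero (Step~1) and nonzero (Step~2), whereas you phrase the same computation coordinate-freely as the assertion that the canonical element $\sum_{s}\sum_{\upalpha\in\Xi^s_{i,j}}\tth(\wch\upalpha)\otimes\upalpha$ lies in $\bigoplus_s\mc V{s\choose i~j}^*\otimes\mc C_s$.
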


\begin{proof}
Let $W_i\otimes W_{i'}$ and $W_j\otimes W_{j'}$ be irreducible $V\otimes V^c$-submodules of $W_I,W_J$ respectively.  Assume that $k\in\mc E$ and not all type $k\choose i~j$ intertwining operators of $V$ are compressions of type $\blt\choose I~J$ intertwining operators of $U$. Let $\scr V{k\choose i~j}$ be a subspace of $\mc V{k\choose i~j}$ with codimension $1$ containing all elements of $\mc V{k\choose i~j}$ that are compressions of type $\blt\choose I~J$ intertwining operators of $U$. Choose a nonzero element $\fk A\in\Hom_V(W_i\boxtimes W_j,W_k)$ such that $\mc Y_{\fk A}\notin\scr V{k\choose i~j}$. We assume that the basis $\Xi^k_{i,j}$ of $\Hom_V(W_i\boxtimes W_j,W_k)$ is chosen such that $\fk A\in\Xi^k_{i,j}$, and that $\mc Y_\upalpha\in\scr V{k\choose i~j}$ for any $\upalpha\in\Xi^k_{i,j}$ not equal to $\fk A$. 

Choose $z,\zeta\in\Cbb$ satisfying \eqref{eq8}. Recall that  $Y^U_I$ is the $U$-vertex operator of $W_I$ and $\mc Y^U_{\ev_{\ovl I,I}}$ is the $U$-annihilation operator of $W_I$. In the following, we shall calculate the fusion relation for the iterate of $V\otimes V^c$-intertwining operators $Y^U_J$ and $\mc Y^U_{\ev_{\ovl I,I}}$ (with restricted charge, source, and target spaces) in two ways. These two methods will give incompatible results, which therefore lead to a contradiction. Let $\uppi_{j\otimes j'}$ be the projection of the algebraic completion of $W_J$ onto the one of $W_j\otimes W_{j'}$.

Step 1. Note that for each $s\in\mc E,s'\in\mc E'$, the set $\Xi^s_{i,j}\times \Theta^{s'}_{i',j'}$ (more precisely, $\{\mc Y_\upalpha\otimes\mc Y'_{\upalpha'}:\upalpha\in \Xi^s_{i,j},\upalpha'\in\Theta^{s'}_{i',j'} \}$) is a basis of the vector space of type $W_s\otimes W_{s'}\choose W_i\otimes W_{i'}~W_j\otimes W_{j'}$ intertwining operators of $V\otimes V^c$. (See \cite{ADL05} theorem 2.10; it is also an easy consequence of corollary \ref{lb2}.) Thus, for any $s\in\mc E,s'\in\mc E'$ and $\upalpha,\upbeta\in\Hom_V(W_i\boxtimes W_j,W_s),\upalpha',\upbeta'\in\Hom_{V^c}(W_{i'}\boxtimes W_{j'},W_{s'})$, there is a \emph{unique} constant $\lambda_{\upalpha,\upbeta,\upalpha',\upbeta'}\in\Cbb$ such that for any
\begin{align}
w_1\in W_{\ovl i},w_2\in W_{\ovl{i'}},w_3\in W_i,w_4\in W_{i'},w_5\in W_j,w_6\in W_{j'},\label{eq18}
\end{align}
the following fusion relation of $V\otimes V^c$-intertwining operators holds:
\begin{align}
&\uppi_{j\otimes j'}\cdot Y^U_J\big(\mc Y^U_{\ev_{\ovl I,I}}(w_1\otimes w_2,z-\zeta)(w_3\otimes w_4),\zeta\big)(w_5\otimes w_6)\nonumber\\
=&\sum_{\substack{s\in\mc E\\s'\in\mc E'}}\sum_{\substack{\upalpha,\upbeta\in\Xi^s_{i,j}\\\upalpha',\upbeta'\in\Theta^{s'}_{i',j'}}}\lambda_{\upalpha,\upbeta,\upalpha',\upbeta'}\cdot \mc Y_{\tth(\wch\upbeta)}(w_1,z)\mc Y_\upalpha(w_3,\zeta)w_5\otimes \mc Y'_{\tth(\wch{\upbeta'})}(w_2,z)\mc Y'_{\upalpha'}(w_4,\zeta)w_6.\label{eq19}
\end{align}
(Recall \eqref{eq12} and \eqref{eq21} for the notations $\mc Y,\mc Y'$.) On the other hand, by \eqref{eq9}, the iterate of the $U$-intertwining operators $Y^U_J$ and $\mc Y^U_{\ev_{\ovl I,I}}$ equals a sum of products of type $J\choose \ovl I~\blt$ intertwining operators and type $\blt\choose I~J$ intertwining operators of $U$. Therefore, by proposition \ref{lb4} and the uniqueness of fusion coefficients, we have
\begin{align}
\lambda_{\fk A,\upbeta,\upalpha',\upbeta'}=0\label{eq27}
\end{align}
for any $s'\in\mc E'$, $\upbeta\in\Xi^k_{i,j}$, and $\upalpha',\upbeta'\in\Theta^{s'}_{i',j'}$. In particular, the right hand side of \eqref{eq19} has no terms containing $\mc Y_{\tth(\wch{\fk A})}(w_1,z)\mc Y_{\fk A}(w_3,\zeta)w_5$.

Step 2. We calculate the iterate of $\uppi_{j\otimes j'}\cdot Y^U_J$ and $\mc Y^U_{\ev_{\ovl I,I}}$ using a different method, and show that some terms containing $\mc Y_{\tth(\wch{\fk A})}(w_1,z)\mc Y_{\fk A}(w_3,\zeta)w_5$ will appear. By the paragraph before the theorem, we know that for any $w_1,w_2,\dots,w_6$ as in \eqref{eq18},
\begin{align}
&\mc Y^U_{\ev_{\ovl I,I}}(w_1\otimes w_2,z-\zeta)(w_3\otimes w_4)\nonumber\\
=&\mc Y_{\ev_{\ovl i,i}}(w_1,z-\zeta)w_3\otimes \mc Y'_{\ev_{\ovl{i'},i'}}(w_2,z-\zeta)w_4\nonumber\\
&+\sum_{W_a\otimes W_{a'}}\sum_{\upgamma,\upgamma'} \mc Y_\upgamma(w_1,z-\zeta)w_3\otimes \mc Y'_{\upgamma'}(w_2,z-\zeta)w_4\label{eq20}
\end{align}
where the first sum is over all irreducible $V\otimes V^c$-submodules of $U\ominus (V\otimes V^c)$ as in the decomposition \eqref{eq17}, $\upgamma\in\Hom_V(W_{\ovl i}\boxtimes W_i,W_a)$, and $\upgamma'\in\Hom_{V^c}(W_{\ovl{i'}}\boxtimes W_{i'},W_{a'})$. We shall now calculate the iterate of $\uppi_{j\otimes j'}\cdot Y^U_J$ with each term on the right hand side of \eqref{eq20}.

The first term is in the algebraic completion of $V\otimes V^c$. Moreover, the restriction of  $Y^U_J$  (regarded as a $V\otimes V^c$-intertwining operator) to  $V\otimes V^c,W_j\otimes W_{j'},W_j\otimes W_{j'}$ equals $Y_j\otimes Y'_{j'}$, where $Y_j,Y'_{j'}$ are respectively the vertex operators of the $V$-module $W_j$ and the $V^c$-module $W_{j'}$. Therefore, by \eqref{eq9},
\begin{align}
&\uppi_{j\otimes j'}\cdot Y^U_J\big(\mc Y_{\ev_{\ovl i,i}}(w_1,z-\zeta)w_3\otimes \mc Y'_{\ev_{\ovl{i'},i'}}(w_2,z-\zeta)w_4,\zeta\big)(w_5\otimes w_6)\nonumber\\
=&Y_j\big(\mc Y_{\ev_{\ovl i,i}}(w_1,z-\zeta)w_3,\zeta \big)w_5\otimes Y'_{j'}\big(\mc Y'_{\ev_{\ovl{i'},i'}}(w_2,z-\zeta)w_4,\zeta \big)w_6\nonumber \\
=&\sum_{s\in\mc E}\sum_{\upalpha\in\Xi^s_{i,j}}\mc Y_{\tth(\wch\upalpha)}(w_1,z)\mc Y_\upalpha(w_3,\zeta)w_5\otimes Y'_{j'}\big(\mc Y'_{\ev_{\ovl{i'},i'}}(w_2,z-\zeta)w_4,\zeta \big)w_6.\label{eq25}
\end{align}
In the above expression, (the sum of) all the terms containing $\mc Y_{\tth(\wch{\fk A})}(w_1,z)\mc Y_{\fk A}(w_3,\zeta)w_5$ is
\begin{align}
\mc Y_{\tth(\wch{\fk A})}(w_1,z)\mc Y_{\fk A}(w_3,\zeta)w_5\otimes Y'_{j'}\big(\mc Y'_{\ev_{\ovl{i'},i'}}(w_2,z-\zeta)w_4,\zeta \big)w_6.\label{eq23}
\end{align}

On the other hand, suppose that when restricted to the charge subspace $W_a\otimes W_{a'}$ (where $W_a\otimes W_{a'}$ is an irreducible submodule of $U\ominus(V\otimes V^c)$) and source and target subspace $W_j\otimes W_{j'}$, the $V\otimes V^c$-intertwining operator $Y^U_J$ could be written as $\sum_{\updelta,\updelta'}\mc Y_\updelta\otimes\mc Y'_{\updelta'}$, where each $\mc Y_\updelta$ is of type $j\choose a~j$ and $\mc Y'_{\updelta'}$ has type $j'\choose a'~j'$. Then the iterate of $\uppi_{j\otimes j'}\cdot Y^U_J$ with the second  term of \eqref{eq20} is
\begin{align}
&\sum_{W_a\otimes W_{a'}}\sum_{\upgamma,\upgamma'}\uppi_{j\otimes j'}\cdot Y^U_J\big(\mc Y_\upgamma(w_1,z-\zeta)w_3\otimes \mc Y_{\upgamma'}(w_2,z-\zeta)w_4,\zeta  \big)(w_5\otimes w_6)\nonumber\\
=&\sum_{W_a\otimes W_{a'}}\sum_{\substack{\upgamma,\upgamma'\\\updelta,\updelta'}}\mc Y_\updelta\big(\mc Y_\upgamma(w_1,z-\zeta)w_3,\zeta \big)w_5\otimes \mc Y'_{\updelta'}\big(\mc Y'_{\upgamma'}(w_2,z-\zeta)w_4,\zeta \big)w_6.\label{eq26}
\end{align}
If we write each $\mc Y_\updelta\big(\mc Y_\upgamma(w_1,z-\zeta)w_3,\zeta \big)w_5$ as a sum of products of $V$-intertwining operators under the bases $\Xi^s_{i,j}$ and $\{\tth(\wch{\upalpha}):\upalpha\in\Xi^s_{i,j}\}$ (over all $s\in\mc E$) similar to part of \eqref{eq19}, then the sum of all the  terms containing $\mc Y_{\tth(\wch{\fk A})}(w_1,z)\mc Y_{\fk A}(w_3,\zeta)w_5$ should be
\begin{align}
\mc Y_{\tth(\wch{\fk A})}(w_1,z)\mc Y_{\fk A}(w_3,\zeta)w_5\otimes \sum_{W_a\otimes W_{a'}}\sum_{\substack{\upgamma,\upgamma'\\\updelta,\updelta'}}\kappa_{\upgamma,\updelta}\cdot \mc Y'_{\updelta'}\big(\mc Y'_{\upgamma'}(w_2,z-\zeta)w_4,\zeta \big)w_6\label{eq24}
\end{align}
where each $\kappa_{\upgamma,\updelta}$ is a constant. By proposition \ref{lb5}, every $W_{a'}$ (which is irreducible) is not isomorphic to $V^c$. Therefore,  as the linear map $\Phi_{z,\zeta}$ (see \eqref{eq22}) is injective,  the  sum of \eqref{eq23} and \eqref{eq24} is not zero for some $w_1,\dots,w_6$ satisfying \eqref{eq18}. This shows that \eqref{eq19} (which is the sum of \eqref{eq25} and \eqref{eq26}) has non-zero terms containing $\mc Y_{\tth(\wch{\fk A})}(w_1,z)\mc Y_{\fk A}(w_3,\zeta)w_5$. In other words, $\lambda_{\fk A,\fk A,\upalpha',\upbeta'}\neq 0$ for some $s'\in\mc E'$ and $\upalpha',\upbeta'\in\Theta^{s'}_{i',j'}$. This gives a contradiction.
\end{proof}

The following result was proved in \cite{KM15}:

\begin{thm}\label{lb6}
Let $V$ be a vertex operator subalgebra of $U$ satisfying \eqref{eq16}. Assume that $U$, $V$, and  $V^c$ are CFT type, self-dual, and regular VOAs. Assume also that $V^{cc}=V$. Then any irreducible $V$-module is the compression of a $U$-module.
\end{thm}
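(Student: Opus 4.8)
The plan is to recast the statement as a claim about the modular tensor category $\Rep(V)$ and then feed it into the structure theory of commutative algebras in braided categories. The first move is to reformulate compression categorically: an irreducible $V$-module $W_i$ is a compression of some $U$-module exactly when $W_i\boxtimes W_{i'}$ occurs, for some irreducible $V^c$-module $W_{i'}$, in the $V\otimes V^c$-decomposition of some $U$-module. Writing $\mc D\subseteq\Rep(V)$ for the full subcategory generated by all such $W_i$, the goal becomes $\mc D=\Rep(V)$. I would first record that $\mc D$ is a fusion subcategory closed under duals: closure under contragredients follows by passing from $W_I$ to $W_{\ovl I}$, and closure under fusion is precisely the content of Theorem~\ref{lb7} — if $W_i,W_j\in\mc D$ and $\mc V{k\choose i~j}\neq 0$, then any nonzero intertwining operator of type $k\choose i~j$ is a compression of type $\blt\choose I~J$ intertwining operators of $U$, which in particular supplies a nonzero grading-preserving $\psi\colon W_K\to W_k$ and thus exhibits $W_k$ as a compression of $W_K$. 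Hence $\mc D$ contains the vacuum $W_0$ (as $V\otimes V^c\subseteq U$) and is stable under the tensor operations of $\Rep(V)$; everything reduces to showing that $\mc D$ is not proper.

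Next I would set up the algebraic picture. Since $U$ is a regular, self-dual, CFT-type extension of $V\otimes V^c$, it corresponds to a connected (as $U$ is simple) commutative separable algebra $A$ in the modular tensor category $\mc C=\Rep(V)\boxtimes\Rep(V^c)$, and the category of $U$-modules is the category $\mc C_A^{\loc}$ of local $A$-modules. Computing the commutant shows that $V^{cc}=V$ is equivalent to the statement that the only irreducible summand of $A$ of the form $W_a\boxtimes W_{0'}$ is $W_0\boxtimes W_{0'}$; by Proposition~\ref{lb5} this holds together with its dual statement for summands $W_0\boxtimes W_{a'}$, so $A$ meets each tensor factor of $\mc C$ trivially. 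By the classification of such algebras (Davydov; Davydov--M\"uger--Nikshych--Ostrik), $A$ is then the graph of a braided equivalence $\phi\colon\mc B_1\xrightarrow{\simeq}\ovl{\mc B_2}$ between fusion subcategories $\mc B_1\subseteq\Rep(V)$ and $\mc B_2\subseteq\Rep(V^c)$, namely $A\simeq\bigoplus_{W_g}W_g\boxtimes\phi(W_g)$ over $W_g$ running through the simple objects of $\mc B_1$. Here $\mc B_1$ is exactly the first-factor support of the vacuum module $U$, and it is in general a \emph{proper} subcategory of $\Rep(V)$, so non-vacuum $U$-modules must be used.

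The crux — and the step I expect to be the main obstacle — is to prove that the first-factor support of \emph{all} of $\mc C_A^{\loc}$ is the whole of $\Rep(V)$, which gives $\mc D=\Rep(V)$ and completes the argument. Two inputs feed in. Inducing $W_x\boxtimes W_{0'}$ for $W_x$ in the M\"uger centralizer $\mc B_1'$ yields a local $A$-module containing it, so $\mc D\supseteq\langle\mc B_1,\mc B_1'\rangle$; but when $\mc B_1$ is degenerate this need not exhaust $\Rep(V)$. The essential point is that the remaining simple $V$-modules are realized not as inductions but as constituents of genuinely local $A$-modules lying over the nontrivial $\mc B_1$-cosets: because $\phi$ is a \emph{braided} equivalence, the $V^c$-factor can absorb the monodromy obstruction of any $W_i$, so every coset $\mc B_1\cdot W_i$ is met by some local module; equivalently $\mc C_A^{\loc}$ is the relative Deligne product $\Rep(V)\boxtimes_{\mc B_1}\Rep(V^c)$ formed along $\phi$, in which every object of $\Rep(V)$ survives. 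Making this structural fact precise is where the real work lies — alternatively one could bypass it by the modular-invariance argument of \cite{KM15}, showing directly that the multiplicity matrices of the $U$-modules have full first-index support; the preceding reductions are formal.
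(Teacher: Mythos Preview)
The paper does not prove this theorem at all: it is quoted verbatim from \cite{KM15}, and Theorem~\ref{lb7} is then derived from it together with the preceding (unnamed) theorem. So there is no ``paper's proof'' to compare against beyond the modular-invariance argument of Krauel--Miyamoto, which you yourself mention as the fallback.

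Your categorical approach is an interesting alternative, but as written it has two problems. First, a logical one: you invoke Theorem~\ref{lb7} to show that $\mc D$ is closed under fusion, but in the paper's deductive structure Theorem~\ref{lb7} is a \emph{consequence} of Theorem~\ref{lb6}; using it here would be circular. What you actually need---and what your parenthetical explanation describes---is the unnamed theorem immediately preceding Theorem~\ref{lb6} (``if $W_i,W_j$ are compressions of $W_I,W_J$, then any type $k\choose i~j$ intertwining operator is a compression of type $\blt\choose I~J$''). That theorem does not depend on Theorem~\ref{lb6}, so the argument can be repaired by correcting the reference; but you should be explicit that you are not assuming what you are proving.

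Second, and more seriously, the ``crux'' is not established. The assertion that a connected \'etale algebra $A$ in $\Rep(V)\boxtimes\Rep(V^c)$ meeting each factor trivially must be the graph of a braided equivalence between fusion subcategories is not a theorem in this generality: the results of Davydov and DMNO of that shape concern Lagrangian algebras (trivial category of local modules) or algebras in $\mc C\boxtimes\ovl{\mc C}$, and a general conformal extension $U\supset V\otimes V^c$ need not have multiplicity-one decomposition or define a map $W_a\mapsto W_{a'}$. Even granting the graph description, you concede that showing the first-factor support of $\mc C_A^{\loc}$ is all of $\Rep(V)$ is ``where the real work lies,'' and the monodromy-absorption heuristic you give is not a proof. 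Since everything ultimately bottoms out in ``alternatively, use \cite{KM15},'' the proposal in its current form is a sketch of a possible reorganization rather than an independent proof.
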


The above two theorems imply immediately the following:

\begin{thm}\label{lb7}
Under the assumption of theorem \ref{lb6}, any irreducible intertwining operator of $V$ is a compression of intertwining operators of $U$.
\end{thm}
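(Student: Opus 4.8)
The plan is to deduce the statement immediately by composing Theorem \ref{lb6} with the theorem established above, together with the bookkeeping encoded in Definition \ref{lb8}. Let $\mc Y$ be an irreducible intertwining operator of $V$, say of type $k\choose i~j$. Then its charge space $W_i$ and its source space $W_j$ are irreducible $V$-modules, which is precisely the setting in which Theorem \ref{lb6} can be applied.

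First I would apply Theorem \ref{lb6} separately to $W_i$ and to $W_j$. Since every irreducible $V$-module is the compression of some $U$-module, this produces $U$-modules $W_I$ and $W_J$ such that $W_i$ is a compression of $W_I$ and $W_j$ is a compression of $W_J$. These are exactly the hypotheses needed to invoke the theorem proved above.

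Next I would feed $W_I, W_J$ (and $W_i, W_j$) into that theorem. As $\mc Y$ is an irreducible intertwining operator with charge space $W_i$ and source space $W_j$, the theorem gives that $\mc Y$ is a compression of type $\blt\choose I~J$ intertwining operators of $U$. Finally I would unwind the definitions: by Definition \ref{lb8}(b) this means $\mc Y$ is a finite sum of compressions of intertwining operators of $U$ with charge space $W_I$ and source space $W_J$, and by Definition \ref{lb8}(c) any such finite sum is, by definition, a compression of intertwining operators of $U$. This yields the conclusion.

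The main point here is that there is essentially no obstacle left to overcome: all the analytic and categorical work — the two-sided computation of the fusion relation for the iterate of $Y^U_J$ and $\mc Y^U_{\ev_{\ovl I,I}}$, and the crucial use of $V^{cc}=V$ via Proposition \ref{lb5} — has already been carried out in the theorem above, while the existence of the ambient $U$-modules is guaranteed by the Krauel--Miyamoto result (Theorem \ref{lb6}). The only things to verify are that the running hypotheses of the two theorems coincide (they do, verbatim) so that the statements compose without a gap, and that Theorem \ref{lb6} is applied independently to $W_i$ and $W_j$, since the charge and source spaces may well be realized as compressions of different $U$-modules $W_I$ and $W_J$.
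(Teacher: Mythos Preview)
Your proposal is correct and matches the paper's own argument: the paper simply states that Theorem \ref{lb7} follows immediately from the preceding theorem together with Theorem \ref{lb6}, which is exactly the composition you spell out.
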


\noindent {\small \sc Department of Mathematics, Rutgers University, USA.}

\noindent {\em E-mail}: bin.gui@rutgers.edu\qquad binguimath@gmail.com
\end{document}